\newcommand{\ingd}{\textsc{Ingd}\xspace}
\newcommand{\grad}{\nabla}
\newcommand{\iprod}[2]{\langle #1, #2 \rangle}
\newcommand{\cO}{\mathcal{O}}
\newcommand{\R}{\mathbb{R}}
\newcommand{\E}{\mathbb{E}}
\newcommand{\G}{\mathcal{G}}
\newcommand{\Y}{\mathcal{Y}}
\newcommand{\fF}{\mathfrak{F}}
\newtheorem{theorem}{Theorem}
\newtheorem{lemma}[theorem]{Lemma}
\newtheorem{remark}[theorem]{Remark}
\theoremstyle{definition}
\newtheorem{definition}{Definition}
\newtheorem{assumption}{Assumption}
\newtheorem{proposition}[theorem]{Proposition}
\icmltitlerunning{Complexity of Finding Stationary Points of
Nonsmooth Nonconvex Functions}
\begin{document}

\twocolumn[
\icmltitle{Complexity of Finding Stationary Points of
Nonsmooth Nonconvex Functions}




\begin{icmlauthorlist}
\icmlauthor{Jingzhao Zhang}{mit}
\icmlauthor{Hongzhou Lin}{mit}
\icmlauthor{Stefanie Jegelka}{mit}
\icmlauthor{Suvrit Sra}{mit}
\icmlauthor{Ali Jadbabaie}{mit}

\end{icmlauthorlist}

\icmlaffiliation{mit}{Massachusetts Institute of Technology}

\icmlcorrespondingauthor{Jingzhao Zhang}{jzhzhang@mit.edu}

\icmlkeywords{Optimization, Non-convex optimization}

\vskip 0.3in
]



\printAffiliationsAndNotice{}  

\begin{abstract}
We provide the first \emph{non-asymptotic} analysis for  finding stationary points of nonsmooth, nonconvex functions. In particular, we study the class of Hadamard semi-differentiable functions, perhaps the largest class of nonsmooth functions for which the chain rule of calculus holds. This class contains examples such as ReLU neural networks and others with non-differentiable activation functions. We first show that finding an $\epsilon$-stationary point with first-order methods is impossible in finite time. We then introduce the notion of  \emph{$(\delta, \epsilon)$-stationarity}, which allows for an $\epsilon$-approximate gradient to be the convex combination of generalized gradients evaluated at points within distance $\delta$ to the solution. We propose a series of randomized first-order methods and analyze their complexity of finding a $(\delta, \epsilon)$-stationary point. Furthermore, we provide a lower bound and show that our stochastic algorithm has min-max optimal dependence on $\delta$. Empirically, our methods perform well for training ReLU neural networks.

\end{abstract}

\vspace*{-15pt}
\section{Introduction}
\vspace*{-3pt}
Gradient based optimization underlies most of machine learning and it has attracted tremendous research attention over the years. While non-asymptotic complexity analysis of gradient based methods is well-established for convex and \emph{smooth} nonconvex problems, little is known for nonsmooth nonconvex problems. We summarize the known rates (black) in Table~\ref{tab:summary} based on the references~\citep{nesterov2018lectures, carmon2017lower, arjevani2019lower}.


\begin{table}[h]
\caption{
When the problem is nonconvex and nonsmooth, finding a  $\epsilon$-stationary point is intractable, see Theorem~\ref{thm:lower}. Thus we introduce a refined notion,  $(\delta,\epsilon)$-stationarity, and provide non-asymptotic convergence rates for finding $(\delta,\epsilon)$-stationary point.
}
\label{tab:summary}
\vskip 0.15in
\begin{center}
\begin{small}
\begin{sc}
\begin{tabular}{lccr}
\toprule
 \textbf{Deterministic rates} & Convex & Nonconvex \\
\midrule
L-smooth    & $\cO(\epsilon^{-0.5})$ & $\cO(\epsilon^{-2})$\\
L-Lipschitz    & $\cO(\epsilon^{-2})$ &  \textcolor{blue}{$\tilde{\cO}(\epsilon^{-3}\delta^{-1})$} \\
\end{tabular}
\vskip 0.15in
\begin{tabular}{lccr}
\toprule
 \textbf{Stochastic rates} & Convex & Nonconvex \\
\midrule
L-smooth    & $\cO(\epsilon^{-2})$ & $\cO(\epsilon^{-4})$\\
L-Lipschitz    & $\cO(\epsilon^{-2})$ &  \textcolor{blue}{$\tilde{\cO}(\epsilon^{-4}\delta^{-1})$}  \\
\end{tabular}
\end{sc}
\end{small}
\end{center}
\hrule
\vskip -13pt
\end{table}

Within the nonsmooth nonconvex setting, recent research results have focused on asymptotic convergence analysis~\citep{benaim2005stochastic, kiwiel2007convergence, majewski2018analysis, davis2018stochastic, bolte2019conservative}. Despite their advances, these results fail to address finite-time, non-asymptotic convergence rates. Given the widespread use of nonsmooth nonconvex problems in machine learning, a canonical example being deep ReLU neural networks, obtaining a \emph{non-asymptotic} convergence analysis is an important open problem of fundamental interest. 

We tackle this problem for nonsmooth functions that are Lipschitz and directionally differentiable. This class is rich enough to cover common machine learning problems, including ReLU neural networks. Surprisingly, even for this seemingly restricted class, finding an $\epsilon$-stationary point, i.e., a point $\bar{x}$ for which $d(0, \partial f(\bar{x})) \le \epsilon$,  is intractable. In other words, no algorithm can guarantee to find an $\epsilon$-stationary point within a \emph{finite} number of iterations. 

This intractability suggests that, to obtain meaningful non-asymptotic results, we need to refine the  notion of stationarity. We introduce such a notion and base our analysis on it, leading to the following main contributions of the paper:
\begin{itemize}[leftmargin=1em]
  \setlength\itemsep{-1pt}
  \vspace*{-8pt}
\item We show that a traditional $\epsilon$-stationary point cannot be obtained in finite time (Theorem~\ref{thm:finite}). 
\item We study the notion of $(\delta, \epsilon)$-stationary points (see Definition~\ref{def:stationary}). For smooth functions, this notion reduces to usual  $\epsilon$-stationarity by setting $\delta = O(\epsilon/L)$. We provide a $\Omega(\delta^{-1})$ lower bound on the number of calls if algorithms are only allowed access to a generalized gradient oracle.
\item We propose a normalized ``gradient descent'' style algorithm that achieves $\tilde{\cO}(\epsilon^{-3}\delta^{-1})$ complexity in finding a $(\delta,\epsilon)$-stationary point in the deterministic setting. 
\item We propose a momentum based algorithm that achieves $\tilde{\cO}(\epsilon^{-4}\delta^{-1})$ complexity in finding a $(\delta,\epsilon)$-stationary point in the stochastic finite variance setting.
\end{itemize}
As a proof of concept to validate our theoretical findings, we implement our stochastic algorithm and show that it matches the performance of empirically used SGD with momentum method for training ResNets on the Cifar10 dataset.

Our results attempt to bridge the gap from recent advances in developing a non-asymptotic theory for nonconvex  optimization algorithms to settings that apply to training deep neural networks, where, due to non-differentiability of the activations, most existing theory does not directly apply.

\subsection{Related Work}\label{sec:related}
\textbf{Asymptotic convergence for nonsmooth nonconvex functions.} \citet{benaim2005stochastic} study the convergence of subgradient methods from a differential inclusion perspective; \citet{majewski2018analysis} extend the result to include proximal and implicit updates. \citet{bolte2019conservative} focus on formally justifying the back propagation rule under nonsmooth conditions. In parallel, \citet{davis2018stochastic} proved asymptotic convergence of subgradient methods assuming the objective function to be Whitney stratifiable. The class of Whitney stratifiable functions is broader than regular functions studied in~\citep{majewski2018analysis}, and it does not assume the regularity inequality (see Lemma 6.3 and (51) in~\citep{majewski2018analysis}). Another line of work \citep{mifflin1977algorithm, kiwiel2007convergence, burke2018gradient} studies convergence of gradient sampling algorithms. These algorithms assume a deterministic generalized gradient oracle. Our methods draw intuition from these algorithms and their analysis, but are non-asymptotic in contrast.

\textbf{Structured nonsmooth nonconvex problems.} Another line of research in nonconvex optimization is to exploit structure: \citet{duchi2018stochastic, drusvyatskiy2019efficiency, davis2019stochastic} consider the composition structure $f\circ g$ of convex and smooth functions;  \citet{bolte2018first,zhang2018convergence,beck2020convergence} study composite objectives of the form $f+g$ where one function is differentiable or convex/concave. With such structure, one can apply proximal gradient algorithms if the proximal mapping can be efficiently evaluated. However, this usually requires weak convexity, i.e., adding a quadratic function makes the function convex, which is not satisfied by several simple functions, e.g., $-|x|$. 

\textbf{Stationary points under smoothness.} When the objective function is smooth, SGD finds an $\epsilon$-stationary point in $O(\epsilon^{-4})$ gradient calls~\citep{ghadimi2013stochastic}, which improves to $O(\epsilon^{-2})$ for convex problems. Fast upper bounds under a variety of settings (deterministic, finite-sum, stochastic) are studied in \citep{carmon2018accelerated, fang2018spider,zhou2018stochastic,nguyen2019optimal,allen2018make,reddi2016stochastic}. More recently, lower bounds have also been developed \citep{carmon2017lower,drori2019complexity,arjevani2019lower,foster2019complexity}. When the function enjoys high-order smoothness, a stronger goal is to find an approximate second-order stationary point and could thus escape saddle points too. Many methods focus on this goal \citep{ge2015escaping, agarwal2017finding,jin2017escape,daneshmand2018escaping, fang2019sharp}. 




\section{Preliminaries}
In this section, we set up the notion of generalized directional derivatives that will play a central role in our analysis. Throughout the paper, we assume that the nonsmooth function $f$ is $L$-Lipschitz continuous (more precise assumptions on the function class are outlined in \S\ref{sec:funct-class}).

\subsection{Generalized gradients}
We start with the definition of generalized gradients, following~\citep{clarke1990optimization}, for which we first need:
\begin{definition}
  Given a point $x \in \R^d$, and direction $d$, the \textbf{\emph{generalized directional derivative}} of $f$ is defined as 
\begin{align*}
    f^\circ(x;d) := \limsup_{y\to x, t \downarrow 0} \tfrac{f(y+td)- f(y)}{t}.
\end{align*}
\end{definition}
\begin{definition}
The \textbf{\emph{generalized gradient}} of $f$ is defined as 
\begin{align*}
    \partial f(x) := \{g \mid \iprod{g}{d} \leq f^\circ(x, d),\  \forall d \in \mathbb{R}^d\}.
\end{align*}
\end{definition}
We recall below the following basic properties of the generalized gradient, see e.g.,~\citep{clarke1990optimization} for details.

\begin{proposition}[Properties of generalized gradients]\leavevmode\\[-18pt]
\begin{enumerate}
  \setlength{\itemsep}{2pt}
    \item $\partial f(x)$ is a nonempty, convex compact set. For all vectors $g \in \partial f(x)$, we have $\|g\| \le L$.
    \item $f^\circ(x; d) = \max\{\iprod{g}{d} \mid g \in \partial f(x)\}$.
    \item $\partial f(x)$ is an upper-semicontinuous set valued map. 
    \item $f$ is differentiable almost everywhere (as it is $L$-Lipschitz); let $\mathrm{conv}(\cdot)$ denote the convex hull, then
\begin{align*}
    \partial f(x) = \text{conv}\bigl(\bigl\{g | g = \lim_{k\to \infty}\grad f(x_k),\ x_k \to x\bigr\}\bigr).
\end{align*} 
    \item Let $B$ denote the unit Euclidean ball. Then, 
      $$\partial f(x) = \cap_{\delta > 0}\cup_{y \in x+\delta B} \partial f(y).$$
    \item For any $y, z$, there exists $\lambda \in (0, 1)$ and $g \in \partial f(\lambda y + (1-\lambda)z)$ such that $f(y) - f(z) = \iprod{g}{y - z}$.
\end{enumerate}
\end{proposition}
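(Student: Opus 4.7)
The plan is to establish the six properties by leveraging two structural facts about the generalized directional derivative: (i) for fixed $x$, the map $d \mapsto f^\circ(x, d)$ is sublinear (positively homogeneous and subadditive) and satisfies $f^\circ(x, d) \le L\|d\|$; and (ii) $f^\circ$ is upper-semicontinuous jointly in $(x, d)$. Fact (i) is immediate from the definition together with the $L$-Lipschitz continuity of $f$, while fact (ii) follows by pushing limits inside the outer $\limsup$ defining $f^\circ$.

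For Property 1, the norm bound $\|g\| \le L$ drops out of $\iprod{g}{d} \le f^\circ(x, d) \le L\|d\|$, while convexity and closedness follow because $\partial f(x)$ is an intersection of closed half-spaces; combined with boundedness this gives compactness. Nonemptiness follows from Hahn--Banach applied to the sublinear majorant $f^\circ(x, \cdot)$: extend a linear functional on $\mathrm{span}(d_0)$ to all of $\R^d$ while remaining dominated by $f^\circ(x, \cdot)$. For Property 2, the $\le$ direction is by definition, and for $\ge$ apply Hahn--Banach to extend $t \mapsto t\,f^\circ(x, d)$ from $\mathrm{span}(d)$ to $\R^d$ under the sublinear majorant, producing $g \in \partial f(x)$ with $\iprod{g}{d} = f^\circ(x, d)$.

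Properties 3 and 5 both exploit fact (ii). For Property 3, if $g_k \in \partial f(x_k)$ with $x_k \to x$ and $g_k \to g$, then $\iprod{g_k}{d} \le f^\circ(x_k, d)$ for every $d$, and upper-semicontinuity of $f^\circ$ yields $\iprod{g}{d} \le f^\circ(x, d)$ in the limit, so $g \in \partial f(x)$. Property 5 is the topological restatement: the inclusion $\subseteq$ is trivial since $x \in x + \delta B$, and the reverse inclusion uses upper-semicontinuity together with local boundedness (from Property 1) to show that every cluster point of subgradients at points approaching $x$ lies in $\partial f(x)$.

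Property 4 invokes Rademacher's theorem, so $f$ is differentiable on a set $\Omega_f$ of full Lebesgue measure. The inclusion $\supseteq$ follows from Property 3 applied to $g_k = \grad f(x_k)$, together with convexity of $\partial f(x)$ to absorb the convex hull. For the reverse inclusion, argue by contradiction: if some $g \in \partial f(x)$ lies outside the closed convex hull on the right-hand side, a separating hyperplane produces a direction $d$ with $\iprod{g}{d}$ strictly larger than $\iprod{g'}{d}$ for every limit $g'$; then use Rademacher along line segments, via the fundamental theorem of calculus $f(y+td) - f(y) = \int_0^t \iprod{\grad f(y+sd)}{d}\,\dd s$ valid for almost every $y$, to express $f^\circ(x, d)$ as a $\limsup$ of such inner products at nearby differentiable points, contradicting Property 2.

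Finally, Property 6 (Lebourg's mean value theorem) is the most delicate and is the main obstacle. Define $\phi(t) := f(z + t(y-z)) - t\bigl(f(y) - f(z)\bigr)$ on $[0,1]$, so $\phi(0) = \phi(1) = f(z)$. Since $\phi$ is Lipschitz it attains an interior extremum at some $t^* \in (0,1)$, at which a nonsmooth Fermat rule gives $0 \in \partial \phi(t^*)$; a Clarke-style chain rule along the line segment then yields $\partial \phi(t^*) \subseteq \bigl\{\iprod{g}{y-z} - (f(y)-f(z)) : g \in \partial f(z + t^*(y-z))\bigr\}$, producing the desired $g$ and $\lambda = t^*$. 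The hard step is justifying both the Fermat rule and the chain rule purely from the $\limsup$ definition of $f^\circ$, since neither reduces cleanly to a smooth one-dimensional calculus argument and both rely on carefully controlling the interaction between perturbations in the base point $y$ and in the step $t$ inside the defining $\limsup$.
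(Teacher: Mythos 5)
Your proposal is sound, but note that the paper does not prove this proposition at all: it is stated as a recollection of standard facts with a pointer to Clarke's monograph, so there is no in-paper argument to compare against. What you have written is essentially the canonical development from Clarke: Property 1 from the sublinearity and $L\|d\|$-bound on $f^\circ(x,\cdot)$ plus Hahn--Banach for nonemptiness; Property 2 as the support-function duality (the $\ge$ direction again via Hahn--Banach extension from $\mathrm{span}(d)$, using $-f^\circ(x,d)\le f^\circ(x,-d)$ from subadditivity); Properties 3 and 5 from joint upper-semicontinuity of $f^\circ$ (for Property 5 the reverse inclusion is even simpler than you suggest, since $g$ in the intersection lies in $\partial f(y_k)$ for some $y_k\to x$, so the closed-graph property of Property 3 applies with the constant sequence $g_k=g$); Property 4 is Clarke's gradient-limit characterization via Rademacher, separation, and the integral representation along segments; and Property 6 is Lebourg's mean value theorem. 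All of this is correct in outline. The only caveat is that the two steps you yourself flag as delicate --- the reverse inclusion in Property 4 (showing $f^\circ(x,d)$ is dominated by the $\limsup$ of $\iprod{\grad f(y)}{d}$ over differentiable $y\to x$, which needs a Fubini-type argument over nearby segments) and the Fermat rule plus chain rule $\partial\phi(t)\subseteq\iprod{\partial f(z+t(y-z))}{y-z}-(f(y)-f(z))$ in Property 6 --- are sketched rather than executed, and they are precisely the places where the work lies; since the paper delegates the entire proposition to the literature, citing Clarke for those two steps (or carrying out the estimates) is the appropriate way to close them.
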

\subsection{Directional derivatives}
Since general nonsmooth functions can have arbitrarily large variations in their ``gradients,'' we must  restrict the function class to be able to develop a meaningful complexity theory. We show below that directionally differentiable functions match this purpose well.

\begin{definition}\label{def:direction}
A function $f$ is called \textbf{\emph{directionally differentiable in the sense of Hadamard}} (\emph{cf.} \citep{sova1966, shapiro1990concepts}) if for any mapping $\varphi: \mathbb{R}_+ \to X$ for which $\varphi(0) = x$ and $\lim_{t\to 0^+}\frac{\varphi(t) - \varphi(0)}{t}=d$, the following limit exists:
\begin{align}
    f'(x;d) = \lim_{t \to 0^+}\tfrac{1}{t}(f(\varphi(t)) - f(x)).
\end{align}
\end{definition}
In the rest of the paper, we will say a function $f$ is \textbf{directionally differentiable} if it is directionally differentiable in the sense of Hadamard at all $x$. 

This directional differentiabilility is also referred to as Hadamard semidifferentiability in~\citep{delfour2019introduction}. Notably, such directional differentiability is satisfied by most problems of interest in machine learning. It includes functions such as  $f(x) = -|x|$ that \emph{do not} satisfy the so-called regularity inequality (equation (51) in~\citep{majewski2018analysis}).
Moreover, it covers the class of semialgebraic functions, as well as o-minimally definable functions (see Lemma 6.1 in \citep{coste2000introduction}) discussed in \citep{davis2018stochastic}. Currently, we are unaware whether the notion of Whitney stratifiability (studied in some recent works on nonsmooth  optimization) implies directional differentiability.

A very important property of directional differentiability is that it is preserved under composition.
\begin{lemma}[Chain rule]\label{lemma:chain-rule}
Let $\phi$ be Hadamard directionally differentiable at $x$, and $\psi$ be Hadamard directionally differentiable at $ \phi(x)$. Then the composite mapping $\psi\circ \phi$ is Hadamard directionally differentiable at $x$ and
\begin{equation*}
(\psi \circ \phi)'_x = \psi'_{\phi(x)} \circ \phi'_x.
\end{equation*}
\end{lemma}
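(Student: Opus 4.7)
The plan is to prove the chain rule by a direct two-step unwinding of the Hadamard definition. The key conceptual point — and the reason the paper singles out Hadamard (as opposed to merely Gateaux) directional differentiability — is that the defining limit in Definition~\ref{def:direction} is required to exist along \emph{every} curve $\varphi$ tangent to $d$ at $0$, not just along the straight line $t \mapsto x+td$. This extra robustness is precisely what lets the image curve $\phi\circ\varphi$ be fed back into the definition at the point $\phi(x)$, which is what a chain rule requires.

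Concretely, I would fix an arbitrary admissible test curve $\varphi:\mathbb{R}_+\to X$ with $\varphi(0)=x$ and $\lim_{t\to 0^+}(\varphi(t)-x)/t = d$, and form the image curve $\tilde\varphi(t):=\phi(\varphi(t))$. The first step is to check that $\tilde\varphi$ is an admissible test curve for the Hadamard definition of $\psi$ at $\phi(x)$ in direction $\phi'_x(d)$. Clearly $\tilde\varphi(0)=\phi(x)$, and applying the Hadamard differentiability of $\phi$ at $x$ to the curve $\varphi$ gives
\[
\lim_{t\to 0^+}\frac{\tilde\varphi(t)-\phi(x)}{t} \;=\; \lim_{t\to 0^+}\frac{\phi(\varphi(t))-\phi(x)}{t} \;=\; \phi'_x(d),
\]
as required. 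A minor side remark: this limit automatically forces $\tilde\varphi(t)\to\phi(x)$, so no separate continuity statement is needed.

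The second step is to invoke the Hadamard differentiability of $\psi$ at $\phi(x)$ along the curve $\tilde\varphi$ with direction $\phi'_x(d)$. This yields
\[
\lim_{t\to 0^+}\frac{\psi(\tilde\varphi(t))-\psi(\phi(x))}{t} \;=\; \psi'_{\phi(x)}\bigl(\phi'_x(d)\bigr).
\]
Since $\psi(\tilde\varphi(t))=(\psi\circ\phi)(\varphi(t))$ and $\psi(\phi(x))=(\psi\circ\phi)(x)$, the left-hand side is the difference quotient for $\psi\circ\phi$ along $\varphi$. As $\varphi$ was an arbitrary admissible curve tangent to $d$, this is exactly the assertion that $\psi\circ\phi$ is Hadamard directionally differentiable at $x$ with derivative $(\psi\circ\phi)'_x = \psi'_{\phi(x)}\circ\phi'_x$.

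The main obstacle is essentially recognizing that there is no obstacle: the Hadamard formulation is engineered so that admissibility of $\varphi$ propagates to admissibility of $\phi\circ\varphi$. The only place where one could be tempted to slip is in imagining one could weaken the hypothesis to plain Gateaux directional differentiability — there the test is restricted to straight lines, the composed curve $\phi\circ\varphi$ is generally not straight, and the argument collapses. Thus the proof is little more than two applications of the definition, with the nontrivial work already invested in the choice of the function class.
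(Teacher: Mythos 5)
Your proof is correct and is essentially the standard argument: the paper itself does not prove this lemma but defers to \citep[Proposition 3.6]{shapiro1990concepts}, whose proof is exactly your two-step curve-composition — use Hadamard differentiability of $\phi$ to show $\phi\circ\varphi$ is an admissible test curve at $\phi(x)$ with direction $\phi'_x(d)$, then apply the definition for $\psi$ along that curve. Your remark that this is precisely where Hadamard (as opposed to Gateaux) differentiability is needed is also the right observation.
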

A proof of this lemma can be found in \citep[Proposition 3.6]{shapiro1990concepts}. As a consequence, any neural network function composed of directionally differentiable functions, including ReLU/LeakyReLU, is directionally differentiable.

Directional differentiability also implies key properties useful in the analysis of nonsmooth problems. In particular, it enables the use of (Lebesgue) path integrals as follows.
\begin{lemma}\label{lemma:lebesgue-integral}
Given any $x,y$, let $\gamma(t) =  x + t(y-x)$, $t \in [0, 1]$. If $f$ is directionally differentiable and Lipschitz, then 
\begin{align*}
    f(y) - f(x) 
    &= \int_{[0,1]}f'(\gamma(t); y - x) dt.
\end{align*}
\end{lemma}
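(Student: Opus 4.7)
The plan is to reduce the claim to a one-dimensional fundamental theorem of calculus along the line segment, using that $f \circ \gamma$ is Lipschitz on $[0,1]$ and that its a.e.\ derivative can be identified with $f'(\gamma(t); y-x)$ via the Hadamard directional derivative.

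First, I would define $h \colon [0,1] \to \R$ by $h(t) = f(\gamma(t))$, where $\gamma(t) = x + t(y-x)$. Since $\gamma$ is affine and $f$ is $L$-Lipschitz, $h$ is Lipschitz on $[0,1]$ with constant $L\|y-x\|$, and therefore absolutely continuous. The classical one-dimensional theorem for absolutely continuous functions then gives
\begin{equation*}
h(1) - h(0) \;=\; \int_{[0,1]} h'(t)\, dt,
\end{equation*}
where the integral is a Lebesgue integral and $h'(t)$ exists for a.e.\ $t \in [0,1]$ by Rademacher's theorem in dimension one.

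Next, I would identify $h'(t)$ at points of differentiability. At any such $t$, compute the right derivative using the Hadamard property: let $\varphi(s) = \gamma(t+s)$ for $s \ge 0$, so $\varphi(0) = \gamma(t)$ and $\lim_{s \to 0^+} (\varphi(s) - \varphi(0))/s = y-x$. Definition~\ref{def:direction} then yields
\begin{equation*}
\lim_{s \to 0^+} \frac{h(t+s) - h(t)}{s} \;=\; \lim_{s \to 0^+} \frac{f(\varphi(s)) - f(\gamma(t))}{s} \;=\; f'(\gamma(t); y-x).
\end{equation*}
Whenever $h'(t)$ exists, the right derivative equals $h'(t)$, and hence $h'(t) = f'(\gamma(t); y-x)$ for a.e.\ $t$. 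Substituting into the previous display gives the claim. Boundedness of the integrand, needed for measurability and integrability issues to be harmless, follows from the Lipschitz bound $|f'(\gamma(t); y-x)| \le L\|y-x\|$ at every point where the directional derivative exists.

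The only subtle step is the identification $h'(t) = f'(\gamma(t); y-x)$ a.e. One might worry that a naive chain rule could fail because the two-sided derivative $h'(t)$ need not equal the directional derivative when $f$ is merely directionally (not Fr\'echet) differentiable; indeed $f'(z; y-x)$ and $-f'(z; x-y)$ can differ, as the example $f(x)=|x|$ shows. This is reconciled by invoking Rademacher first: at a.e.\ $t$, $h'(t)$ exists as a two-sided limit, and at such points the right-hand limit above forces the equality. Everywhere else the identification is irrelevant because the set has Lebesgue measure zero.
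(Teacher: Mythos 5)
Your proposal is correct and follows essentially the same route as the paper: reduce to the one-variable function $h(t)=f(\gamma(t))$, use its Lipschitzness to get absolute continuity and the Lebesgue fundamental theorem of calculus, and identify $h'(t)$ with $f'(\gamma(t);y-x)$ at the a.e.\ points of differentiability. Your explicit remark that the one-sided Hadamard limit pins down the two-sided derivative wherever the latter exists is a welcome clarification of a step the paper's proof passes over silently, but it is not a different argument.
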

The following important lemma further connects directional derivatives with generalized gradients.
\begin{lemma}\label{lemma:directional-generalized}
Assume that the directional derivative exists. For any $x, d$, there exists $g \in \partial f(x)$ \text{s.t.} $\iprod{g}{d} = f'(x;d)$.
\end{lemma}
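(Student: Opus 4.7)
The plan is to exploit that $\partial f(x)$ is convex and compact (property 1), so the image of the linear functional $g \mapsto \iprod{g}{d}$ over $\partial f(x)$ is a closed interval
\begin{equation*}
  \bigl\{\iprod{g}{d} : g \in \partial f(x)\bigr\} \;=\; [m, M],
\end{equation*}
where $M = \max_{g \in \partial f(x)} \iprod{g}{d}$ and $m = \min_{g \in \partial f(x)} \iprod{g}{d} = -\max_{g \in \partial f(x)} \iprod{g}{-d}$. By property~2 of generalized gradients, $M = f^\circ(x;d)$ and $m = -f^\circ(x;-d)$. So the lemma reduces to the sandwich inequality
\begin{equation*}
  -f^\circ(x;-d) \;\le\; f'(x;d) \;\le\; f^\circ(x;d),
\end{equation*}
after which an intermediate-value argument (continuity of $g \mapsto \iprod{g}{d}$ on the connected set $\partial f(x)$) produces the desired $g$.

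For the upper bound, I would specialize the limsup in the definition of $f^\circ(x;d)$ to the particular sequence $y = x$, $t \downarrow 0$; this trajectory already lies in the set over which the limsup is taken, so
\begin{equation*}
  f^\circ(x;d) \;\ge\; \limsup_{t \downarrow 0} \tfrac{f(x+td)-f(x)}{t} \;=\; f'(x;d),
\end{equation*}
using that the directional derivative exists (so the limsup is a genuine limit).

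For the lower bound, the trick is to choose a less obvious trajectory in the limsup defining $f^\circ(x;-d)$. I would set $y_t = x + td$ and let $t \downarrow 0$; then $y_t \to x$ and
\begin{equation*}
  \frac{f(y_t - td) - f(y_t)}{t} \;=\; \frac{f(x) - f(x+td)}{t} \;\longrightarrow\; -f'(x;d).
\end{equation*}
Hence $f^\circ(x;-d) \ge -f'(x;d)$, i.e.\ $-f^\circ(x;-d) \le f'(x;d)$, which is exactly the lower inequality.

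The two inequalities together place $f'(x;d)$ in the closed interval $[m,M]$ attained by $g \mapsto \iprod{g}{d}$ on $\partial f(x)$; picking any preimage gives the required $g$. The main subtlety I anticipate is making sure the limsup argument in the lower bound is rigorous: I need $y_t \to x$ and $t \downarrow 0$ simultaneously along the same parameter, which is a legitimate choice inside the joint limsup, and I need the directional derivative to exist along $d$ (given by hypothesis) so that the incremental quotient converges rather than merely having a limsup.
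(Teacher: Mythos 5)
Your proof is correct, but it takes a genuinely different route from the paper's. The paper invokes the mean value theorem for generalized gradients (Proposition 1.6) on the segment $[x, x+t_k d]$ to produce $g_k \in \partial f(\xi_k)$ with $\iprod{g_k}{d}$ equal to the difference quotient, and then passes to an accumulation point, using boundedness of the $g_k$ and upper semicontinuity of $\partial f$ (Proposition 1.3) to conclude the limit lies in $\partial f(x)$. You instead prove the sandwich $-f^\circ(x;-d) \le f'(x;d) \le f^\circ(x;d)$ directly from the limsup definition (specializing to $y=x$ for the upper bound, and to the shifted trajectory $y_t = x+td$ for the lower bound), and then use the support-function identity $f^\circ(x;\cdot) = \max_{g\in\partial f(x)}\iprod{g}{\cdot}$ (Proposition 1.2) together with convexity and compactness of $\partial f(x)$ to realize any value in $[-f^\circ(x;-d),\, f^\circ(x;d)]$ as $\iprod{g}{d}$ for some $g$. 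Your argument is more elementary in that it needs only Properties 1--2 and the definition of $f^\circ$, avoiding both the mean value theorem and upper semicontinuity; the paper's argument is more constructive, exhibiting $g$ as a limit of gradients taken along the approach direction, which is closer in spirit to how the oracle is used elsewhere. Both are rigorous; the only point worth making explicit in yours is that the shifted trajectory $(y_t, t) = (x+td, t)$ is an admissible path inside the joint limsup defining $f^\circ(x;-d)$ (it is, since $y_t \to x$ as $t \downarrow 0$), and that Hadamard directional differentiability applied to $\varphi(t) = x+td$ guarantees the difference quotients converge to $f'(x;d)$ rather than merely having a limsup.
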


\subsection{Nonsmooth function class of interest}
\label{sec:funct-class}
Throughout the paper, we focus on the set of Lipschitz, directionally differentiable and bounded (below) functions:
\begin{align}\label{eq:fclass}
    \mathcal{F}(\Delta, L) := \{f | & f \text{ is $L$-Lipschitz}; \nonumber\\
    & f \text{ is directionally differentiable};\nonumber\\
    & f(x_0) - \inf_x f(x) \le \Delta\},
\end{align}
where a function $f:\mathbb{R}^n \to \mathbb{R}$ is $L-$Lipschitz if
\begin{align*}
    |f(x) - f(y)| \le L\|x-y\|, \forall \ x, y \in \mathbb{R}^n.
\end{align*}
As indicated previously, ReLU neural networks with bounded weight norms are included in this function class. 

\section{Stationary points and oracles}
We now formally define our notion of stationarity and discuss the intractability of the standard notion. Afterwards, we formalize the optimization oracles and define measures of complexity for algorithms that use these oracles.

\subsection{Stationary points}
With the generalized gradient in hand, commonly a point is called stationary if $0 \in \partial f(x)$ \cite{clarke1990optimization}. A natural question is, what is the necessary complexity to obtain an \emph{$\epsilon$-stationary point}, i.e., a point $x$ for which \[ \min\{\|g\| \mid\ g \in \partial f(x)\} \le \epsilon.\] 
It turns out that attaining such a point is intractable. In particular, there is no finite time algorithm that can guarantee $\epsilon$-stationarity in the nonconvex nonsmooth setting. We make this claim precise in our first main result.
\begin{theorem}\label{thm:finite}
Given any algorithm~$\mathcal{A}$ that accesses function value and generalized gradient of $f$ in each iteration, for any $\epsilon \in [0, 1)$ and for any finite iteration $T$, there exists $f \in \mathcal{F}(\Delta, L)$ such that the sequence $\{ x_t \}_{t\in[1,T]}$ generated by $\mathcal{A}$ on the objective $f$ does not contain any $\epsilon$-stationary point with probability more than $\tfrac{1}{2}$. 
\end{theorem}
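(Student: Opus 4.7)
The plan is to exhibit a one‑parameter family of hard instances and use a Yao‑style averaging argument to show that for any algorithm, a random choice of instance produces iterates that miss the unique $\epsilon$‑stationary point with probability one. Since the algorithm has only function‑value and generalized‑gradient access, we only need to ensure that (i) the sole $\epsilon$‑stationary point is a single unknown point, and (ii) at every other point the oracle returns information that is independent of where that stationary point sits.

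For the hard family, I would take $d=1$ and define, for each $a$ in an interval $I$ centred at the initial point $x_0$,
\begin{equation*}
f_a(x) \;=\; L\,|x-a|,\qquad a\in I,
\end{equation*}
where $I$ is chosen with $|I|\le 2\Delta/L$ so that $f_a(x_0)-\inf_x f_a(x)=L|x_0-a|\le\Delta$. Each $f_a$ is convex, $L$-Lipschitz, directionally differentiable, and bounded below, so $f_a\in\mathcal F(\Delta,L)$. A direct computation gives $\partial f_a(x)=\{L\operatorname{sign}(x-a)\}$ for $x\neq a$ and $\partial f_a(a)=[-L,L]$, hence (using $\epsilon<1\le L$, or more generally $\epsilon<L$) the \emph{unique} $\epsilon$‑stationary point is $x=a$.

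Now fix an algorithm $\mathcal A$ (possibly randomized with internal randomness $\omega$) and a horizon $T$, and draw $a$ uniformly from $I$, independently of $\omega$. I would argue inductively that for each $t\le T$, conditional on $\omega$ and on the event $\{X_1\neq a,\dots,X_{t-1}\neq a\}$, the response at step $i<t$ is $r_i=L\operatorname{sign}(X_i-a)\in\{-L,+L\}$; there are at most $2^{t-1}$ possible response histories, and on each of the corresponding cells of $a$‑values the iterate $X_t(\omega,a)$ equals a constant $c_{\omega,r_{1:t-1}}$ (determined by $\mathcal A$). Consequently the set $\{a\in I:X_t(\omega,a)=a\}$ is a finite union of solutions of equations of the form "$c=a$", hence has Lebesgue measure zero. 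Taking a union over $t\le T$ and integrating over $\omega$ yields
\begin{equation*}
\Pr_{a,\omega}\bigl[\exists\, t\le T:\ X_t=a\bigr]=0.
\end{equation*}
By Fubini, $\mathbb E_a\bigl[\Pr_\omega[\forall t,\ X_t\neq a]\bigr]=1$, so there exists $a^\star\in I$ with $\Pr_\omega[\forall t,\ X_t\neq a^\star]=1>\tfrac12$. For the instance $f_{a^\star}$, no iterate is $\epsilon$‑stationary with probability one, completing the proof.

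The main obstacle, and the place where care is warranted, is the reduction from randomized algorithms to the averaging argument: one must verify that each $X_t$, viewed as a function of $a$ on the event that no earlier iterate hit $a$, is genuinely a piecewise constant with a finite number of pieces on $I$, so that the set $\{a:X_t=a\}$ is measure zero. The slight subtleties are (a) the oracle's freedom to choose any element of $\partial f_a(a)$ at $x=a$, which is irrelevant because the event $\{X_i=a\}$ has probability zero, and (b) confirming that $f_a$ lies in $\mathcal F(\Delta,L)$ (immediate from $|x_0-a|\le\Delta/L$). Everything else is a routine verification.
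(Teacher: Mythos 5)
There is a genuine gap: your hard family $f_a(x)=L|x-a|$ is not indistinguishable under the oracle the theorem grants. The theorem (and Assumption~1(a)) gives the algorithm the \emph{function value} as well as a generalized gradient at each query, and for your instances the function value leaks $a$ exactly: from a single query at any $x_1\neq a$ the algorithm learns $f_a(x_1)=L|x_1-a|$ and $\nabla f_a(x_1)=L\,\mathrm{sign}(x_1-a)$, hence $a=x_1-\mathrm{sign}(x_1-a)\,f_a(x_1)/L$, and it can set $x_2=a$ deterministically. This breaks the key step of your induction, where you assert that the response history takes values only in $\{-L,+L\}^{t-1}$ and that $X_t$ is piecewise constant in $a$; in fact $X_t$ may depend continuously and injectively on $a$ through the returned function values, so $\{a: X_t=a\}$ need not have measure zero. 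You even state the correct requirement yourself --- ``at every other point the oracle returns information that is independent of where that stationary point sits'' --- but $L|x-a|$ violates it. (Your measure-theoretic averaging argument would be fine for a gradient-only oracle, which is essentially the setting of the paper's Theorem~\ref{thm:lower}, not Theorem~\ref{thm:finite}.)

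The fix is to make the instances agree on \emph{both} outputs of the oracle at all queried points. The paper does this with a resisting oracle that always returns $f(x_k)=0$ and $\nabla f(x_k)=L$, and then constructs a family of piecewise-linear zigzag functions $f_\theta$ consistent with those answers whose sets of $\epsilon$-stationary points are pairwise disjoint (the kinks sit at $x_k-\theta\delta$ and at shifted midpoints, with $\theta$ a free offset). Two members of that family are then indistinguishable from the oracle transcript, so no algorithm succeeds on both with probability above $\tfrac12$. Your averaging-over-$a$ device could be retained on top of such a value-consistent family, but as written the construction does not prove the theorem.
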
 

A key ingredient of the proof is that an algorithm~$\mathcal{A}$ is uniquely determined by $\{f(x_t), \partial f(x_t)\}_{t \in [1,T]}$, the function values and gradients at the query points. For any two functions $f_1$ and $f_2$ that have the same function values and gradients at the same set of queried points $\{x_1, ..., x_t\}$, the distribution of the iterate $x_{t+1}$ generated by $\mathcal{A}$ is identical for $f_1$ and $f_2$. However, due to the richness of the class of nonsmooth functions, we can find $f_1$ and $f_2$ such that the set of $\epsilon$-stationary points of $f_1$ and $f_2$ are disjoint. Therefore, the algorithm cannot find a stationary point with probability more than $\tfrac{1}{2}$ for both $f_1$ and $f_2$ simultaneously. Intuitively, such functions exist because a nonsmooth function could vary arbitrarily---e.g., a nonsmooth nonconvex function could have constant gradient norms except at the (local) extrema, as happens for a piecewise linear zigzag function. Moreover, the set of extrema could be of measure zero. Therefore, unless the algorithm  lands exactly in this measure-zero set, it cannot find any $\epsilon$-stationary point. 


Theorem~\ref{thm:finite} suggests the need for rethinking the definition of stationary points. Intuitively, even though we are unable to find an $\epsilon$-stationary point, one could hope to find a point that is close to an $\epsilon$-stationary point. This motivates us to adopt the following more refined notion:


\begin{definition}\label{def:stationary} 
A point $x$ is called \emph{$(\delta, \epsilon)$-stationary} if 
\begin{align*}
     d(0, \partial f(x+\delta B)) \le \epsilon,
\end{align*}
where $\partial f(x+\delta B) := \text{conv}(\cup_{y \in x + \delta B}\partial f(y))$ is the Goldstein $\delta$-subdifferential, introduced in \cite{goldstein1977optimization}. 
\end{definition}
Note that if we can find a point $y$ at most distance $\delta$ away from $x$ such that $y$ is $\epsilon$-stationary, then we know $x$ is $(\delta, \epsilon)$-stationary. However, the contrary is not true. In fact, \citep{shamir2020can} shows that finding a point that is $\delta$ close to an $\epsilon-$stationary point requires exponential dependence on the dimension of the problem.

At first glance, Definition~\ref{def:stationary} appears to be a weaker notion since if $x$ is $\epsilon$-stationary, then it is also a  $(\delta,\epsilon)$-stationary point for any $\delta \ge 0$, but not vice versa. We show that the converse implication indeed holds, assuming smoothness. 

\begin{proposition}\label{prop:equivalence} The following statements hold:
  \begin{enumerate}[label=(\roman*)]
    \setlength{\itemsep}{0pt}
    \item $\epsilon$-stationarity implies $(\delta,\epsilon)$-stationarity for any $\delta \ge 0$. 
    \item If $f$ is smooth with an $L$-Lipschitz gradient and if $x$ is ($\frac{\epsilon}{3L}$, $\frac{\epsilon}{3}$)-stationary, then $x$ is also $\epsilon$-stationary, i.e.
\[   d\left (0,  \partial f \left (x+ \tfrac{\epsilon}{3L} B \right ) \right ) \le \tfrac{\epsilon}{3} \,\, \implies \,\, \Vert \nabla f(x) \Vert \le \epsilon. \]
\end{enumerate}
\end{proposition}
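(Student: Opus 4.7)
The plan is to treat the two parts separately, as each is short but uses different ingredients.

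For (i), I would simply observe that $x \in x+\delta B$ for any $\delta \ge 0$, hence $\partial f(x) \subseteq \bigcup_{y \in x+\delta B} \partial f(y) \subseteq \mathrm{conv}\bigl(\bigcup_{y \in x+\delta B} \partial f(y)\bigr) = \partial f(x+\delta B)$. Taking the distance from the origin to a superset can only decrease it, so $d(0,\partial f(x+\delta B)) \le d(0,\partial f(x)) \le \epsilon$, giving $(\delta,\epsilon)$-stationarity.

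For (ii), the key observation is that when $f$ is continuously differentiable we have $\partial f(y) = \{\nabla f(y)\}$ for every $y$, so the Goldstein subdifferential takes the clean form
\begin{equation*}
\partial f(x+\delta B) \;=\; \mathrm{conv}\bigl(\{\nabla f(y): \|y - x\| \le \delta\}\bigr).
\end{equation*}
Using the $L$-Lipschitz continuity of $\nabla f$, every $y$ with $\|y-x\| \le \delta = \epsilon/(3L)$ satisfies $\|\nabla f(y) - \nabla f(x)\| \le L\delta = \epsilon/3$. By convexity of the norm, the same bound transfers to any convex combination: for every $g \in \partial f(x+\delta B)$,
\begin{equation*}
\|g - \nabla f(x)\| \;\le\; \tfrac{\epsilon}{3}.
\end{equation*}
By the hypothesis, there exists some $g^\star \in \partial f(x+\delta B)$ with $\|g^\star\| \le \epsilon/3$. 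The triangle inequality then gives $\|\nabla f(x)\| \le \|\nabla f(x) - g^\star\| + \|g^\star\| \le \epsilon/3 + \epsilon/3 \le \epsilon$.

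There is essentially no obstacle here — the whole argument is a one-line set-inclusion for (i) and a one-line triangle-inequality computation for (ii). The only conceptual point worth articulating in the write-up is why the perturbation bound passes through the convex hull; I would state this as a brief sub-lemma (or inline remark) that a set contained in a Euclidean ball of radius $r$ around a point has convex hull contained in the same ball, since Euclidean balls are convex.
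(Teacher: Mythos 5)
Your proposal is correct and follows essentially the same route as the paper: part (i) is the set inclusion $\partial f(x)\subseteq \partial f(x+\delta B)$, and part (ii) uses the $L$-Lipschitz gradient to place every $\nabla f(y)$ (and hence, by convexity of the ball, every element of the Goldstein subdifferential) within $\epsilon/3$ of $\nabla f(x)$, followed by the triangle inequality. The only cosmetic difference is that the paper writes the convex-hull element explicitly as a finite combination $\sum_i \alpha_i \nabla f(x_i)$ and bounds it term by term (taking a witness of norm at most $2\epsilon/3$ rather than exactly attaining the distance), whereas you package the same step as ``balls are convex''; both are fine.
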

Consequently, the two notions of stationarity are equivalent for differentiable functions. It is then natural to ask: \emph{does $(\delta,\epsilon)$-stationarity permit a finite time analysis?}

The answer is positive, as we will show later, revealing an intrinsic difference between the two notions of stationarity. Besides providing algorithms, in Theorem~\ref{thm:lower} we also prove an $\Omega(\delta^{-1})$ lower bound on the dependency of $\delta$  for algorithms that can only access a generalized gradient oracle.

We also note that $(\delta,\epsilon)$-stationarity behaves well as $\delta \downarrow 0$.  

\begin{lemma}\label{lemma:approx}
The set $\partial f(x+\delta B)$ converges as $\delta \downarrow 0$ as
\begin{align*}
  \lim_{\delta \downarrow 0} \partial f(x+\delta B) = \partial f(x).
\end{align*}
\end{lemma}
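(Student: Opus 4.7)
The plan is to interpret the limit as the intersection over $\delta>0$, exploiting that the family $\{\partial f(x+\delta B)\}_{\delta>0}$ is monotonically decreasing: for $0 < \delta_1 < \delta_2$ we have $x+\delta_1 B \subseteq x+\delta_2 B$, so $\partial f(x+\delta_1 B) \subseteq \partial f(x+\delta_2 B)$. Since each set is convex and compact (a convex hull of a compact set in $\mathbb{R}^d$), the Painlev\'e--Kuratowski limit as $\delta\downarrow 0$ agrees with $\bigcap_{\delta>0} \partial f(x+\delta B)$. Thus the lemma reduces to proving the set equality
\begin{equation*}
\bigcap_{\delta>0} \mathrm{conv}\Bigl(\bigcup_{y \in x+\delta B} \partial f(y)\Bigr) \;=\; \partial f(x).
\end{equation*}

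For the inclusion $\supseteq$, I would simply observe that $x \in x+\delta B$ for every $\delta>0$, so any $g \in \partial f(x)$ lies in $\bigcup_{y \in x+\delta B}\partial f(y)$, hence in its convex hull, for all $\delta>0$. Intersecting over $\delta>0$ gives the inclusion.

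For the nontrivial inclusion $\subseteq$, the key ingredient is the upper semicontinuity of the set-valued map $\partial f$ (property 3 of the listed properties). Fix any $\eta>0$; by upper semicontinuity together with the fact that $\partial f(x)$ is closed, there exists $\delta_\eta>0$ such that
\begin{equation*}
\partial f(y) \subseteq \partial f(x) + \eta B \quad \text{for every } y \in x+\delta_\eta B.
\end{equation*}
Because $\partial f(x)$ is convex (property 1), the set $\partial f(x)+\eta B$ is convex, so it contains the convex hull of the union, yielding $\partial f(x+\delta_\eta B) \subseteq \partial f(x)+\eta B$. Consequently,
\begin{equation*}
\bigcap_{\delta>0}\partial f(x+\delta B) \;\subseteq\; \bigcap_{\eta>0}\bigl(\partial f(x)+\eta B\bigr) \;=\; \partial f(x),
\end{equation*}
where the last equality uses that $\partial f(x)$ is closed.

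I do not anticipate a serious obstacle: the only subtlety is verifying that upper semicontinuity of $\partial f$ at $x$ is strong enough to absorb the convex hull, which works precisely because convexity of $\partial f(x)$ makes the tubular neighborhood $\partial f(x)+\eta B$ convex. If one prefers a quantitative Hausdorff-distance statement, the same argument upgrades to $\mathrm{dist}_H(\partial f(x+\delta B), \partial f(x)) \to 0$ as $\delta\downarrow 0$, with the inner-approximation direction being immediate from $\partial f(x) \subseteq \partial f(x+\delta B)$ and the outer-approximation direction following from the upper-semicontinuity argument above.
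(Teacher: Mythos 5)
Your proof is correct and follows essentially the same route as the paper's: identify the limit with the decreasing intersection, get $\supseteq$ trivially from $x \in x+\delta B$, and get $\subseteq$ from upper semicontinuity of $\partial f$ plus the observation that convexity of $\partial f(x)+\eta B$ absorbs the convex hull. The only cosmetic difference is that the paper cites its Proposition 1.5 for the easy inclusion where you use the (even simpler) containment of $x$ in the ball, and you make explicit the closedness of $\partial f(x)$ needed for the final intersection over $\eta$.
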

Lemma~\ref{lemma:approx} enables a straightforward routine for transforming non-asymptotic analyses for finding $(\delta,\epsilon)$-stationary points to asymptotic results for finding $\epsilon$-stationary points. Indeed, assume that a finite time algorithm for finding $(\delta,\epsilon)$-stationary points is provided. Then, by repeating the algorithm with decreasing $\delta_k$, (e.g., $\delta_k=1/k$), any accumulation points of the repeated algorithm is an $\epsilon$-stationary point with high probability. 




\subsection{Gradient Oracles}\label{sec:alg-class}
We assume that our algorithm has access to a generalized gradient oracle in the following manner: 
\begin{assumption}\label{assump}\ 
Given $x, d$, the oracle $\mathbb{O}(x, d)$ returns a function value $f_x$, and a generalized gradient $g_x$,
\begin{align*}
    (f_x, g_x) = \mathbb{O}(x, d),
\end{align*}
such that
\begin{enumerate}[label=(\alph*)]
    \item In the \textbf{deterministic} setting, the oracle returns \[ f_x = f(x), \,\, g_x  \in \partial f(x) \text{ satisfying } \iprod{g_x}{d} = f'(x, d).\]
    \item In the \textbf{stochastic finite-variance} setting, the oracle only returns a stochastic gradient $g$ with $\E[g] = g_x$, where $g_x \in \partial f(x)$ satisfies $\iprod{g_x}{d} = f'(x, d)$. Moreover, the variance $\E[\|g-g_x\|^2] \le \sigma^2$ is bounded. In particular, no function value is accessible.
\end{enumerate}
\end{assumption}

We remark that one cannot generally evaluate the generalized gradient $\partial f$ in practice at any point where $f$ is not differentiable. When the function $f$ is not directionally differentiable, one needs to incorporate gradient sampling to estimate $\partial f$~\citep{burke2002approximating}. Our oracle queries only an element of the generalized gradient and is thus \textbf{weaker} than querying the entire set $\partial f$. Still, finding a vector $g_x$ such that $\iprod{g_x}{d}$ equals the directional derivative $f'(x, d)$ is non-trivial in general. Yet, when the objective function is a composition of directionally differentiable functions, such as ReLU neural networks, and if a closed form directional derivative is available for each function in the composition, then we can find the desired $g_x$ by appealing to the chain rule in Lemma~\ref{lemma:chain-rule}. This property justifies our choice of oracles. 

\subsection{Algorithm class and complexity measures}
An algorithm $A$ maps a function $f \in \mathcal{F}(\Delta, L)$ to a sequence of points $\{x_k\}_{k \ge 0}$ in $\mathbb{R}^n$. We denote $A^{(k)}$ to be the mapping from previous $k$ iterations to $x_{k+1}$.  Each $x_k$ can potentially be a random variable, due to the stochastic oracles or algorithm design. Let $\{\mathcal{F}_k\}_{k \ge 0}$ be the filtration generated by $\{x_k\}$ such that $x_k$ is adapted to $\mathcal{F}_k$. Based on the definition of the oracle, we assume that the iterates follow the structure 
\begin{align}
    x_{k+1} = A^{(k)}(x_1, g_1, f_1, x_2, g_2, f_2, ..., x_k, g_k, f_k), 
\end{align}
where $(f_k, g_k) = \mathbb{O}(y_k, d_k)$, and the point $y_k$ and direction $d_k$ are (stochastic) functions of the iterates $x_1,\ldots, x_k$.  

For a random process $\{x_k\}_{k \in \mathbb{N}}$, we define the complexity of $\{x_k\}_{k \in \mathbb{N}}$ for a function $f$ as the value
\begin{equation}
  \begin{split}
    \label{eq:T-sto}
    &T_{\delta, \epsilon}(\{x_t\}_{t \in \mathbb{N}}, f) :=\\
    &\quad \inf \bigl\{t \in \mathbb{N}\  \mid\  \text{Prob}\{d(0, \partial f(x+\delta B)) \ge \epsilon\\
    &\quad  \quad \ \ \text{ for all } k \le t\} \le \tfrac{1}{3}\bigr\}.
  \end{split}
\end{equation}
Let $A[f, x_0]$ denote the sequence of points generated by algorithm $A$ for function $f$. Then, we define the iteration complexity of an algorithm class $\mathcal{A}$ on a function class $\mathcal{F}$ as
\begin{align}\label{eq:complexity}
    \mathcal{N}(\mathcal{A}, \mathcal{F}, \epsilon, \delta) := \inf_{A \in \mathcal{A}} \sup_{\substack{f \in  \mathcal{F}}} T_{\delta, \epsilon}(A[f, x_0], f).
\end{align}
At a high level, \eqref{eq:complexity} is the minimum number of oracle calls required for a fixed algorithm to find a $(\delta, \epsilon)$-stationary point with probability at least $2/3$ for all functions is class $\cal F$.

\vspace*{-5pt}
\section{Deterministic Setting}\label{sec:det}
For optimizing $L$-smooth functions, a crucial inequality is  
\begin{equation}\label{eq:smooth-descent}
    f\bigl(x - \tfrac{1}{L} \nabla f(x) \bigr) -f(x) \leq -\tfrac{1}{2L} \| \nabla f(x) \|^2.
\end{equation}
In other words, either the gradient is small or the function value decreases sufficiently along the negative gradient. However, when the objective function is nonsmooth, this descent property is no longer satisfied. Thus, defining an appropriate descent direction is non-trivial. Our key innovation is to solve this problem via randomization.

More specifically, in our algorithm, Interpolated Normalized Gradient Descent (\ingd), we derive a local search strategy to find the descent direction at an iterate $x_t$. The vector $m_{t,k}$ plays the role of descent direction and we sequentially update it until the condition
\begin{equation}\label{eq:dc}
    f(x_{t,k}) -f(x_t)  <   -\frac{\delta \| m_{t,k} \|}{4}, \tag{descent condition}
\end{equation} 
is satisfied. To connect with the descent property (\ref{eq:smooth-descent}), observe that when $f$ is smooth, with $m_{t,k} = \nabla f(x_t)$ and $\delta = \| m_{t,k}\|/L$, \eqref{eq:dc} is the same as (\ref{eq:smooth-descent}) up to a factor $2$. This connection motivates our choice of descent condition.

When the descent condition is satisfied, the next iterate~$x_{t+1}$ is obtained by taking a normalized step from $x_t$ along the direction $m_{t,k}$. Otherwise, we stay at $x_t$ and continue the search for a descent direction. We raise special attention to the fact that inside the $k$-loop, the iterates $x_{t,k}$ are always obtained by taking a normalized step from $x_t$. Thus, all the inner iterates $x_{t,k}$ have distance exactly $\delta$ from~$x_t$. 

To update the descent direction, we incorporate a randomized strategy. We randomly sample an interpolation point $y_{t,k+1}$ on the segment $[x_t, x_{t,k}]$ and evaluate the generalized gradient $g_{t,k+1}$ at this random point $y_{t,k+1}$. Then, we update the descent direction as a convex combination of $g_{t,k+1}$ and the previous direction $m_{t,k}$. Due to lack of smoothness, the violation of the descent condition does not directly imply that $g_{t,k+1}$ is small. Instead, the projection of the generalized gradient is small along the direction $m_{t,k}$ on average. Hence, with a proper linear combination, the random interpolation allows us to guarantee the decrease of  $\| m_{t,k} \|$ in expectation. 
This reasoning allows us to derive the non-asymptotic convergence rate in high probability.



\begin{algorithm}[tb]
	\caption{ Interpolated Normalized Gradient Descent } \label{algo}
	\begin{algorithmic}[1]
	    \STATE {\bf Initialize $x_1 \in \R^d$}
		\FOR{$t = 1, 2, ..., T$} 
		\WHILE{$\|m_{t,K}\| > \epsilon$} 
		\STATE Call oracle $\sim, m_{t,1} = \mathbb{O}(x_t, \vec{0})$  
		\FOR{$k = 1, ..., K$}
		    \STATE $x_{t,k} = x_t - \delta \frac{m_{t,k}}{\|m_{t,k}\|}$
		  \IF{$\| m_{t,k} \| \le \epsilon$ }
		  \STATE Terminate the algorithm and return $x_t$
		  \ELSIF{ $f(x_{t,k}) -f(x_t)  <   -\frac{\delta \| m_{t,k} \|}{4}$ }
		  \STATE Break while-loop
		  \STATE Set $x_{t+1} = x_{t,k}$ and $t \gets t+1$
		  \ELSE
		  \STATE Sample $y_{t,k+1}$ uniformly from $ [x_t, x_{t,k}]$ 
		  \STATE Call oracle $\sim, g_{t,k+1} = \mathbb{O}(y_{t,k+1}, -m_{t,k})$
          \STATE Update $m_{t,k+1} =  \beta_{t,k} m_{t,k} + (1-\beta_{t,k}) g_{t,k+1} $ with $\beta_{t,k} = \frac{4L^2 - \|m_{t,k} \|^2 }{4L^2+ 2\|m_{t,k}\|^2 }$
		  \ENDIF
		\ENDFOR
		\ENDWHILE
		\ENDFOR
		\STATE Return $x_t$ such that $\|m_{t,K}\| \le \epsilon$
	\end{algorithmic}
\end{algorithm}
\begin{theorem}\label{thm:det}
In the deterministic setting and with Assumption~\ref{assump}(a), the \ingd algorithm 
with parameters $K = \frac{48L^2}{\epsilon^2}$ and $T = \frac{4\Delta}{\epsilon \delta}$ finds a $(\delta, \epsilon)$-stationary point for function class $\mathcal{F}(\Delta, L)$ with probability $1-\gamma$ using at most 
$$\frac{192\Delta L^2}{\epsilon^3\delta}\log\left(\frac{4\Delta}{\gamma\delta\epsilon}\right)\quad \text{oracle calls.}$$
\end{theorem}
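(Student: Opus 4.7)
The plan is to combine three facts. \textbf{(A)} Each time the descent condition triggers with $\|m_{t,k}\|>\epsilon$, $f(x_{t+1})$ drops by at least $\delta\epsilon/4$ below $f(x_t)$; since $f\ge f(x_1)-\Delta$, at most $T=4\Delta/(\epsilon\delta)$ outer iterations can pass without the algorithm returning. \textbf{(C)} When the algorithm returns $x_t$ we have $\|m_{t,k}\|\le\epsilon$, and an induction on $k$ shows $m_{t,k}$ is a convex combination of $m_{t,1}\in\partial f(x_t)$ and vectors $g_{t,j}\in\partial f(y_{t,j})$. Since $x_{t,j-1}$ lies on the sphere of radius $\delta$ around $x_t$ and $y_{t,j}\in[x_t,x_{t,j-1}]$, each $y_{t,j}$ lies in $x_t+\delta B$, so $m_{t,k}\in\partial f(x_t+\delta B)$ and Definition~\ref{def:stationary} gives $(\delta,\epsilon)$-stationarity of $x_t$. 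What remains is \textbf{(B)}: a single run of the inner loop of length $K=48L^2/\epsilon^2$ must fire an exit condition with constant probability.

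The heart of \textbf{(B)} is the per-step recursion. Applying Lemma~\ref{lemma:lebesgue-integral} along $\gamma(s)=x_t+s(x_{t,k}-x_t)$ and using $x_{t,k}-x_t=-\delta m_{t,k}/\|m_{t,k}\|$, the failure of the descent condition reads
\begin{equation*}
\int_0^1 f'\bigl(\gamma(s);-m_{t,k}\bigr)\,ds\ \ge\ -\tfrac{1}{4}\|m_{t,k}\|^2.
\end{equation*}
Since $y_{t,k+1}=\gamma(s)$ with $s\sim\mathrm{Uniform}[0,1]$ and the oracle guarantees $\langle g_{t,k+1},-m_{t,k}\rangle=f'(y_{t,k+1};-m_{t,k})$, this integral identity becomes $\mathbb{E}[\langle g_{t,k+1},m_{t,k}\rangle\mid\mathcal{F}_k]\le\tfrac{1}{4}\|m_{t,k}\|^2$ whenever descent fails. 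Plugging this and $\|g_{t,k+1}\|\le L$ into the expansion of $\|\beta m_{t,k}+(1-\beta)g_{t,k+1}\|^2$ and minimizing the resulting quadratic in $\beta$ recovers the algorithm's choice $\beta_{t,k}=(4L^2-\|m_{t,k}\|^2)/(4L^2+2\|m_{t,k}\|^2)$ and yields
\begin{equation*}
\mathbb{E}\bigl[\|m_{t,k+1}\|^2\mid\mathcal{F}_k\bigr]\ \le\ \|m_{t,k}\|^2-\tfrac{3}{8L^2}\|m_{t,k}\|^4,
\end{equation*}
after using the inductive bound $\|m_{t,k}\|\le L$. This is the main obstacle: the weak one-sided bound on $\mathbb{E}\langle g_{t,k+1},m_{t,k}\rangle$ must be upgraded, via a carefully tuned convex combination, into a quadratic decrease in $\|m\|^2$, which is what permits the $1/k$ decay below.

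To convert the per-step recursion into a per-loop guarantee, let $\tau$ denote the first inner step at which an exit condition fires and set $W_k:=\mathbb{E}[\|m_{t,k}\|^2\mathbf{1}_{\{\tau>k\}}]$. Multiplying the per-step bound by $\mathbf{1}_{\{\tau>k\}}$ (which is $\mathcal{F}_k$-measurable), taking expectations, and applying Jensen's inequality $\mathbb{E}[X^2]\ge(\mathbb{E}[X])^2$ to $X=\|m_{t,k}\|^2\mathbf{1}_{\{\tau>k\}}$ gives $W_{k+1}\le W_k-\tfrac{3}{8L^2}W_k^2$. Inverting yields $1/W_K\ge 3K/(8L^2)$, so $W_K\le 8L^2/(3K)=\epsilon^2/18$ for $K=48L^2/\epsilon^2$. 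Since $\|m_{t,K}\|^2>\epsilon^2$ on $\{\tau>K\}$, Markov's inequality gives $\mathbb{P}[\tau>K]\le 1/18$. Each iteration of the outer while-loop is an independent attempt at the inner for-loop, so repeating the inner loop $R=O(\log(T/\gamma))$ times per outer iteration drives the per-outer-iteration failure probability to $\le\gamma/T$, and a union bound over the at most $T=4\Delta/(\epsilon\delta)$ outer iterations leaves total failure probability $\le\gamma$. The oracle count is $K\cdot R\cdot T$, giving the stated $(192\Delta L^2/(\epsilon^3\delta))\log(4\Delta/(\gamma\delta\epsilon))$.
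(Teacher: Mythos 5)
Your proposal is correct and follows essentially the same route as the paper's proof: the same three-part decomposition (descent events bounded by $4\Delta/(\epsilon\delta)$ via the $\delta\epsilon/4$ drop, membership of $m_{t,k}$ in the Goldstein subdifferential $\partial f(x_t+\delta B)$, and the inner-loop recursion $\E[\|m_{t,k+1}\|^2]\le\E[\|m_{t,k}\|^2]-c\,\E[\|m_{t,k}\|^2]^2$ obtained from the random interpolation, the path-integral identity, and optimizing the quadratic in $\beta$), followed by the $1/k$ decay, Markov's inequality, and a union bound over restarts. Your bookkeeping via the stopping time $\tau$ and $W_k=\E[\|m_{t,k}\|^2\mathbf{1}_{\{\tau>k\}}]$ is an equivalent substitute for the paper's convention of setting $m_{t,k}=0$ after the loop breaks, and your constant $3/(8L^2)$ versus the paper's $1/(3L^2)$ is immaterial.
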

Since we introduce random sampling for choosing the interpolation point, even in the deterministic setting we can only guarantee a high probability result. The detailed proof is deferred to  Appendix~\ref{proof:det}. 

A sketch of the proof is as follows. Since $\| x_{t,k} -x_t\| = \delta$ for any $k$, the interpolation point $y_{t,k}$ is inside the ball $x_t+\delta B$. Hence $m_{t,k} \in \partial f(x_t+\delta B)$ for any $k$. In other words, as soon as $ \| m_{t,k} \| \le \epsilon$ (line 7), the reference point $x_t$ is $(\delta,\epsilon)$-stationary. If this is not true, i.e., $ \| m_{t,k} \| > \epsilon$, then we check whether \eqref{eq:dc} holds, in which case
\[  f(x_{t,k}) -f(x_t)  <   -\frac{\delta \| m_{t,k} \|}{4} < -\frac{\epsilon \delta}{4} .\]
Knowing that the function value is lower bounded, this can happen  at most $T=\frac{4\Delta}{\epsilon \delta}$ times. Thus, for at least one $x_t$, the local search inside the while loop is not broken by the descent condition. Finally, given that $\| m_{t,k}\| > \epsilon$ and the descent condition is not satisfied, we show that 
\[ \E[\|m_{t,k+1}\|^2]  \le \left(1 - \frac{\E[\|m_{t,k}\|^2]}{3L^2} \right) \E[\|m_{t,k}\|^2] \] 
This implies that $\E[\| m_{t,k} \|^2]$ follows a decrease of order $O(1/k)$. Hence with $K=O(1/\epsilon^2)$, we are guaranteed to find $\|m_{t,k} \| \le \epsilon$ with high probability. 

\begin{remark}\rm If the problem is smooth, the descent condition is always satisfied in one iteration. Hence the global complexity of our algorithm reduces to $T= O(1/\epsilon \delta)$. Due to the equivalence of the notions of stationarity (Prop.~\ref{prop:equivalence}), with $\delta = O(\epsilon/L)$, our algorithm recovers the standard $O(1/\epsilon^2)$ convergence rate for finding an $\epsilon$-stationary point. In other words, our algorithm can adapt to the smoothness condition.         
\end{remark}



\section{Stochastic Setting}
In the deterministic setting one of the key ingredients used \ingd is to check whether the function value decreases sufficiently. However, evaluating the function value can be computationally expensive, or even infeasible in the stochastic setting. For example, when training neural networks, evaluating the entire loss function requires going through all the data, which is impractical. As a result, we do not assume access to function value in the stochastic setting and instead propose a variant of \ingd that only relies on gradient information.   

\begin{algorithm}[htbp]
	\caption{Stochastic \ingd($x_{1}, p, q, \beta, T, K$)}\label{algo:sto}
	\begin{algorithmic}[1]
	    \STATE {\bf Initialize} $x_1 \in \R^d$.
	    \STATE Call oracle $ g(x_1) = \mathbb{O}(x_{1}, \vec{0})$ and set $m_1=g(x_1)$.
		\FOR{$t = 1, 2, ..., T$}
		\STATE Update $x_{t+1} = x_{t} - \eta_{t} m_{t} $ with $\eta_{t}=\tfrac{1}{p\|m_{t}\|+ q}$.
		\STATE Sample $y_{t+1}$ uniformly from $ [x_{t}, x_{t+1}]$
		\STATE Call oracle $g(y_{t+1}) = \mathbb{O}(y_{t+1}, - m_{t})$
		\STATE Update $m_{t+1} = \beta m_{t} + (1-\beta) g(y_{t+1}) $
		
		\ENDFOR
		\STATE Randomly sample $i$ uniformly from $\{1, ..., T\}$.
		\STATE Update $i = \max\{i-K, 1\}$
		\STATE Return $x_i$.
	\end{algorithmic}
\end{algorithm}
One of the challenges of using stochastic gradients is the noisiness of the gradient evaluation. To control the variance of the associated updates, we introduce a parameter $q$ into the normalized step size:
\[ \eta_{t} = \frac{1}{p\|m_{t}\|+ q}.\]
A similar strategy is used in adaptive methods like \cite{adagrad,adam} to prevent instability. Here, we show that the constant $q$ allows us to control the variance of $x_{t+1} - x_{t}$. In particular, it implies the bound
\[ \E [ \|x_{t+1} - x_{t} \|^2 ] \le \frac{G^2}{q},\]
where $G^2:= L^2 + \sigma^2$ is a trivial upper-bound on the expected norm of any sampled gradient $g$. 

Another substantial change (relative to \ingd) is the removal of the explicit local search, since the stopping criterion can now no longer be tested without access to the function value. Instead, one may view $x_{t-K+1}, \dotsc , x_{t-1}, x_{t}$ as an implicit local search with respect to the reference point $x_{t-K}$. In particular, we show that when the direction $m_t$ has a small norm, then $x_{t-K}$ is a $(\delta,\epsilon)$-stationary point, but not~$x_t$. This discrepancy explains why we output $x_{t-K}$ instead of $x_t$.

In the deterministic setting, the direction $m_{t,k}$ inside each local search is guaranteed to belong to $\partial f(x_{t}+\delta B)$. Hence, controlling the norm of $m_{t,k}$ implies the $(\delta,\epsilon)$-stationarity of $x_t$.
In the stochastic case, however, we have two complications. First, only the expectation of the gradient evaluation satisfies the membership $\E[g(y_k)] \in \partial f(y_k)$. Second, the direction $m_t$ is a convex combination of all the previous gradients $g(y_1), \dotsc , g(y_t)$, with all coefficients being nonzero. In contrast, we use a re-initialization in the deterministic setting. We overcome these difficulties and their ensuing subtleties to finally obtain the following complexity result:

\begin{theorem}\label{thm:sto}
In the stochastic setting, with Assumption~\ref{assump}(b), the Stochastic-\ingd algorithm (Algorithm~\ref{algo:sto}) with parameters $G= \sqrt{L^2 + \sigma^2}$, $ \beta = 1 - \frac{\epsilon^2}{64 G^2}$, $p  = \frac{ 64 G^2 \ln(16G/\epsilon)}{\delta \epsilon^2 }$, $q  = 4Gp $, $K=p\delta$, $T = \frac{2^{16} G^3 \Delta \,\, \text{ln}(16G/\epsilon)}{ \epsilon^4 \delta}\max \{1, \frac{  G \delta}{8\Delta} \}$ ensures 
\[ \frac{1}{T} \sum_{t=1}^T \E[\| m_{t} \| ] \le \frac{\epsilon}{4}. \]
In other words, the number of gradient calls to achieve a $(\delta, \epsilon)-$stationary point is upper bounded by 
$  \tilde{\cO} \left (\frac{G^3 \Delta}{\epsilon^4 \delta} \right ). $
\end{theorem}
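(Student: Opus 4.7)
My approach will mirror the deterministic analysis of Theorem~\ref{thm:det}, but replaces the explicit descent test and the re-initialization of the local search by a variance-reduced momentum buffer together with a lookback window of length $K$. The analysis rests on three ingredients: (i) a one-step decrease of $f$ along the direction $-m_t$ derived without smoothness, (ii) a recursive control of the discrepancy between $m_t$ and a convex combination of generalized gradients inside the Goldstein ball $x_{t-K}+\delta B$, and (iii) a conversion of small $\|m_t\|$ into $(\delta,\epsilon)$-stationarity of the shifted iterate $x_{t-K}$.

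First I would derive the single-step descent inequality without invoking smoothness. Writing $x_{t+1}=x_t-\eta_t m_t$ and applying Lemma~\ref{lemma:lebesgue-integral} on the segment $\gamma(s)=x_t+s(x_{t+1}-x_t)$, positive homogeneity of the directional derivative gives $f(x_{t+1})-f(x_t)=\eta_t\int_0^1 f'(\gamma(s);-m_t)\,ds$. Because $y_{t+1}$ is uniform on $[x_t,x_{t+1}]$ and the stochastic oracle in Assumption~\ref{assump}(b) combined with Lemma~\ref{lemma:directional-generalized} ensures that the conditional mean $g_{y_{t+1}}:=\E[g(y_{t+1})\mid\mathcal{F}_t,y_{t+1}]$ satisfies $\langle g_{y_{t+1}},-m_t\rangle=f'(y_{t+1};-m_t)$, the expected integrand equals $\E[\langle g(y_{t+1}),-m_t\rangle\mid\mathcal{F}_t]$. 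Splitting $\langle g(y_{t+1}),m_t\rangle=\|m_t\|^2+\langle g(y_{t+1})-m_t,m_t\rangle$ and applying Cauchy--Schwarz yields the workhorse inequality
\begin{equation*}
\E[f(x_{t+1})\mid\mathcal{F}_t]\le f(x_t)-\eta_t\|m_t\|^2+\eta_t\|m_t\|\cdot\bigl\|m_t-\E[g(y_{t+1})\mid\mathcal{F}_t]\bigr\|.
\end{equation*}
Telescoping this over $t=1,\ldots,T$, using $f\ge f^*$ and $\eta_t\|m_t\|^2=\|m_t\|/(p+q/\|m_t\|)$, produces an averaged bound on $\sum_t\|m_t\|$ provided the momentum error on the right is controlled.

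Next I would control the momentum error and simultaneously relate $m_t$ to $\partial f(x_{t-K}+\delta B)$. Unrolling the recursion gives $m_t=\beta^K m_{t-K}+(1-\beta)\sum_{s=t-K+1}^t\beta^{t-s}g(y_s)$; setting $\tilde m_t:=(1-\beta)\sum_{s=t-K+1}^t\beta^{t-s}g_{y_s}$, the difference $m_t-\tilde m_t$ splits into a ``forgotten tail'' $\beta^K m_{t-K}$ and a martingale noise sum with independent increments of variance at most $\sigma^2$. The calibration $(1-\beta)K=\ln(16G/\epsilon)$ forces $\beta^K G\le\epsilon/16$, and the noise sum has total variance at most $(1-\beta)\sigma^2\le\epsilon^2/64$, so $\sqrt{\E\|m_t-\tilde m_t\|^2}\lesssim\epsilon$. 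Crucially, the normalization $\eta_t\|m_t\|\le 1/p$ combined with $K=p\delta$ ensures that every interpolation point $y_s$ with $s\in(t-K,t]$ lies within distance $\delta$ of $x_{t-K}$; therefore $\tilde m_t/(1-\beta^K)$ is a convex combination of elements of $\partial f(x_{t-K}+\delta B)$, and $\|m_t\|\le\epsilon/2$ together with the error bound gives $d(0,\partial f(x_{t-K}+\delta B))\le\epsilon$.

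Finally I would combine the two strands: substituting the error bound into the descent inequality and exploiting the lower bound on $\eta_t\|m_t\|^2$ gives, after the algebraic manipulation licensed by the specific choices of $p,q,\beta$, the claim $\tfrac{1}{T}\sum_t\E[\|m_t\|]\le\epsilon/4$ once $T=\tilde\Theta(G^3\Delta/(\epsilon^4\delta))$, which is the theorem's iteration count. Markov's inequality applied to the uniform sample $i$ then yields $\|m_i\|\le\epsilon/2$ with probability at least $2/3$, and the lookback $x_i\gets x_{i-K}$ turns this into $(\delta,\epsilon)$-stationarity by the previous paragraph. The step I expect to be the main obstacle is the joint calibration of $\beta,p,q,K$: $\beta$ must be close enough to $1$ to suppress injected variance, yet $(1-\beta)K$ must still be large enough that $\beta^K$ forgets stale gradients, and simultaneously $K/p=\delta$ must hold so that the momentum window stays inside the Goldstein ball. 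Balancing these three constraints against the descent-telescope budget, without losing factors in $G$ or $\epsilon$, is the delicate heart of the argument.
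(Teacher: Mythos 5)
Your second half---unrolling $m_t=\beta^K m_{t-K}+(1-\beta)\sum_{s}\beta^{t-s}g(y_s)$, forcing $\beta^K\le\epsilon/(16G)$, using $K/p\le\delta$ to place all $y_s$ in $x_{t-K}+\delta B$, and outputting the shifted iterate---matches the paper's argument. The gap is in your first half, the ``workhorse inequality.'' That inequality is only useful if you can bound the bias term $\bigl\|m_t-\E[g(y_{t+1})\mid\mathcal{F}_t]\bigr\|$ by $O(\epsilon)$, and in the nonsmooth setting you cannot. Your paragraph on error control bounds a \emph{different} quantity, $m_t-\tilde m_t$, where $\tilde m_t$ replaces each past stochastic gradient by its conditional mean: that controls the martingale noise and the forgotten tail $\beta^K m_{t-K}$, but it says nothing about the discrepancy between the mean gradients at the old points $y_s$, $s\le t$, and the mean gradient field on the new segment $[x_t,x_{t+1}]$. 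For a function that is merely Lipschitz, generalized gradients at arbitrarily close points can differ by $2L$ (think of $|x|$ near the kink), so this systematic drift is $O(L)$, not $O(\epsilon)$. The resulting telescope reads $\sum_t\eta_t\|m_t\|^2\le\Delta+O(L)\sum_t\eta_t\|m_t\|$, and the error term dominates the descent term exactly in the regime $\|m_t\|\ll L$ that you need to reach.

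The paper circumvents this by never comparing $m_t$ to the local gradient. It expands the exact identity $\|m_{t+1}\|^2=(1-\beta)^2\|g(y_{t+1})\|^2+2\beta(1-\beta)\iprod{g(y_{t+1})}{m_t}+\beta^2\|m_t\|^2$, multiplies by $\eta_t$, and sums: the cross term $\sum_t\eta_t\iprod{g(y_{t+1})}{m_t}$ is then \emph{exactly} $\sum_t(f(x_t)-f(x_{t+1}))\le\Delta$ in expectation by the path-integral identity (no Cauchy--Schwarz, no bias), and the analytical burden moves to the telescoping of $\sum_t\eta_t(\beta^2\|m_t\|^2-\|m_{t+1}\|^2)$ under the adaptive step size, which is tamed via the inequality $\|m_t\|-\|m_{t+1}\|\le(1-\beta)(\|m_t\|+\|g(y_{t+1})\|)$ and the calibration $pG/q\le\beta/2$. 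A final Jensen step with the convex map $x\mapsto x^2/(x+4G)$ converts the resulting bound on $\E[\|m_t\|^2/(p\|m_t\|+q)]$ into the claimed bound on $\E[\|m_t\|]$. To repair your proof you would need to replace your descent lemma with this exact-identity route (or otherwise eliminate the $\|m_t-\E[g(y_{t+1})\mid\mathcal{F}_t]\|$ term); as written, that step fails for nonsmooth $f$.
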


For readability, the constants in Theorem~\ref{thm:sto} have not been optimized. The high level idea of the proof is to relate $\E[ \eta_t\| m_t\|^2]$ to the function value decrease $f(x_t)-f(x_{t+1})$, and then to perform a telescopic sum. 

We would like to emphasize the use of the adaptive step size $\eta_t$ and the momentum term $m_{t+1}$. These techniques arise naturally from our goal to find a $(\delta, \epsilon)$-stationary point. The step size $\eta_t$ helps us ensure that the distance moved is at most $\frac{1}{p}$, and hence we are certain that adjacent iterates are close to each other. The momentum term $m_t$ serves as a convex combination of generalized gradients, as postulated by Definition~\ref{def:stationary}.

Further, even though the parameter $K$ does not directly influence the updates of our algorithm, it plays an important role in understanding our algorithm. Indeed, we show that
\[ d\left (\E[m_{t} | x_{t-K}],\partial f(x_{t-K}+\delta B) \right ) \le \frac{\epsilon}{16}. \]
In other words, the conditional expectation $\E[m_{t} | x_{t-K}]$ is approximately in the $\delta$-subdifferential $\partial f(x_{t-K}+\delta B)$ at $x_{t-K}$.  This relationship is non-trivial.

On one hand, by imposing $ K \le \delta p$, we ensure that $x_{t-K+1}, \dotsc, x_{t}$ are inside the $\delta$-ball of center $x_{t-K}$. On the other hand, we guarantee that the contribution of $m_{t-K}$ to $m_t$ is small, providing an appropriate upper bound on the coefficient~$\beta^K$. These two requirements help balance the different parameters in our final choice. Details of the proof may be found in Appendix~\ref{proof:sto}. 

Recall that we do not access the function value in this stochastic setting, which is a strength of the algorithm. In fact, 
%
we can show that our ${\delta^{-1}}$ dependence is tight, when the oracle has only access to generalized gradients.

\begin{theorem}[Lower bound on $\delta$ dependence]\label{thm:lower}
Let $\mathcal{A}$ denote the class of algorithms defined in Section~\ref{sec:alg-class} and $\mathcal{F}(\Delta, L)$ denote the class of functions defined in Equation \eqref{eq:fclass}. Assume $\epsilon \in (0,1)$ and $L = 1$.  Then the iteration complexity is lower bounded by $\frac{\Delta}{8\delta}$ if the algorithm \textbf{only} has access to generalized gradients.
\end{theorem}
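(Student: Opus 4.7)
My plan is to prove the lower bound via an adversarial hard-instance construction, showing that for any algorithm $A$ with only the generalized-gradient oracle of Assumption~\ref{assump}(b), there is an $f \in \mathcal{F}(\Delta, 1)$ on which $A$ cannot return a $(\delta, \epsilon)$-stationary point in fewer than $\Delta/(8\delta)$ queries. By Yao's minimax principle, it is enough to exhibit a distribution $\mu$ over $\mathcal{F}(\Delta, 1)$ and argue that every deterministic algorithm using fewer than $\Delta/(8\delta)$ queries fails with probability at least $1/3$ when $f \sim \mu$.

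For the construction, I would work in $\R^d$ with $d = N := \lceil \Delta/(8\delta) \rceil$ and take $\mu$ to be the law of
\[
f_U(x) \;=\; -\sum_{k=1}^{N}\min\bigl\{8\delta,\ \max\{0,\ \iprod{u_k}{x}\}\bigr\},
\]
where $U=(u_1,\dotsc,u_N)$ is an orthonormal frame drawn from the Haar measure. Each $f_U$ is piecewise affine (hence directionally differentiable), $1$-Lipschitz (as a sum of orthogonally supported $1$-Lipschitz clipped linears), and satisfies $f_U(0)-\inf f_U = 8\delta N \le \Delta$, so $f_U \in \mathcal{F}(\Delta, 1)$. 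Moreover, for any $x$, $\partial f_U(x) = \{-\sum_k \lambda_k u_k : \lambda_k \in [0,1] \text{ with } \lambda_k \in \{0\} \text{ if } \iprod{u_k}{x}>8\delta,\ \lambda_k \in \{1\} \text{ if } \iprod{u_k}{x}<0\}$, so a $(\delta,\epsilon)$-stationary point with $\epsilon<1$ must satisfy $\iprod{u_k}{x} \ge 7\delta$ for every single $k$, forcing $x$ into an intersection of $N$ half-spaces whose normals are unknown to the algorithm.

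The heart of the proof is an information-theoretic argument on the oracle output. At a query $(y,d)$, the gradient $g_y \in \partial f_U(y)$ is a nonnegative combination of the $u_k$ that are ``active'' at $y$, i.e., with $0<\iprod{u_k}{y}<8\delta$; indices that are either saturated ($\iprod{u_k}{y}>8\delta$) or not yet engaged ($\iprod{u_k}{y}<0$) contribute zero and are not revealed. I would then show inductively, using the rotational invariance of the Haar prior, that conditional on the transcript of the first $t$ queries, the posterior over the ``unrevealed'' $N-t$ basis vectors remains Haar on the orthogonal complement of the span they already know. Consequently, after $T<N$ queries the algorithm has no information about at least one direction $u_{k^\star}$, and any candidate output $x$ satisfies $\Pr[\iprod{u_{k^\star}}{x} \ge 7\delta]<\tfrac{1}{3}$ by a concentration argument on the uniform measure on the sphere. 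Hence $x$ fails to be $(\delta,\epsilon)$-stationary with probability $>\tfrac{1}{3}$, giving $\mathcal{N}(\mathcal{A},\mathcal{F},\epsilon,\delta) \ge N \ge \Delta/(8\delta)$.

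The main obstacle is handling the non-differentiable regions where several of the thresholds $\iprod{u_k}{y}\in\{0,8\delta\}$ hold simultaneously: the oracle can then return any element of a face of $\partial f_U(y)$ chosen via the direction $d$, and a careless choice could leak more than one direction per query. I would resolve this by specifying an adversarial oracle that, within the degree of freedom allowed by Assumption~\ref{assump}(b), always selects the extreme point of $\partial f_U(y)$ that lies in the span already revealed, so that each query disclses at most one genuinely new $u_k$; the coupling between this oracle and the true $f_U$ must then be checked to preserve the Haar posterior. A secondary, more routine, step is translating the posterior bound on $U$ into a high-probability bound that no fixed $x$ lies in $\bigcap_k \{\iprod{u_k}{\cdot}\ge 7\delta\}$, which follows from standard spherical concentration since each constraint carves out a cap of measure bounded away from $1$.
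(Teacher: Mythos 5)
There is a genuine gap — in fact two independent ones — in the hard instance itself, before the information-theoretic machinery even comes into play. First, $f_U$ is not $1$-Lipschitz: at a point $x$ where several indices are simultaneously active ($0<\iprod{u_k}{x}<8\delta$ for all $k$ in a set $A$, e.g.\ $x=4\delta\sum_k u_k$), the gradient is $-\sum_{k\in A}u_k$, whose norm is $\sqrt{|A|}$, up to $\sqrt{N}$. A sum of $1$-Lipschitz functions supported on orthogonal directions is only $\sqrt{N}$-Lipschitz in the Euclidean norm, so $f_U\notin\mathcal{F}(\Delta,1)$. Second, and fatally, your characterization of the stationary points is wrong: the $k$-th term contributes $-u_k$ to the gradient only when $0<\iprod{u_k}{y}<8\delta$, and contributes $0$ both when $\iprod{u_k}{y}>8\delta$ \emph{and} when $\iprod{u_k}{y}<0$. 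Hence the correct necessary condition for $(\delta,\epsilon)$-stationarity is ``$\iprod{u_k}{x}\le\delta$ \emph{or} $\iprod{u_k}{x}\ge 7\delta$ for each $k$,'' not ``$\iprod{u_k}{x}\ge 7\delta$ for each $k$.'' The origin satisfies the first branch for every $k$: the open cone $\{y:\iprod{u_k}{y}<0\ \forall k\}$ touches the origin, $f_U\equiv 0$ there, so $0\in\partial f_U(x_1+\delta B)$ (indeed $0\in\partial f_U(0)$), and the algorithm succeeds at the very first iterate by outputting its starting point. The instance is not hard, so no lower bound follows. Any clipped-ramp construction of this type must eliminate the flat plateau on the ``unentered'' side of each ramp, which is exactly what forces the zigzag structure in correct constructions.

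For comparison, the paper's proof avoids all of this by working in one dimension with a resisting oracle: the oracle answers $\nabla f(x)=1$ at every query, and after $K\le\Delta/(8\delta)$ queries one exhibits two piecewise-linear functions in $\mathcal{F}(\Delta,1)$, both consistent with every answer given (slope $1$ on a $2\delta$-neighborhood of each queried point), whose slopes alternate between $\pm 1$ so that every point is either slope $\pm1$ or a kink, and whose kinks are at least $4\delta$ apart; shifting one function by $1.5\delta$ makes their $(\delta,\epsilon)$-stationary sets disjoint for $\epsilon<1$, so no algorithm succeeds on both with probability above $\tfrac12$. The budget $\Delta$ enters because the consistent function rises by at most $4\delta$ per queried point. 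If you want to salvage your high-dimensional, Yao-style route, you would need a construction in which the function strictly decreases along every unexplored direction near any reachable point (so that no queried or output point is accidentally stationary) while keeping the global Lipschitz constant equal to $1$ — both of which your current $f_U$ violates.
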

The proof is inspired by Theorem 1.1.2 in \cite{nesterov2018lectures}. We show that unless more than $\frac{\Delta}{8\delta}$ different points are queried, we can construct two different functions in the function class that have gradient norm $1$ at all the queried points, and the stationary points of both functions are $\Omega(\delta)$ away. For more details, see Appendix~\ref{app:lower}. 

This theorem also implies the negative result for finite time analyses that we showed in Theorem~\ref{thm:finite}. Indeed, when an algorithm finds an $\epsilon$-stationary point, the point is also a  $(\delta,\epsilon)$-stationary for any $\delta>0$.  Thus, the iteration complexity must be at least $\lim_{\delta \rightarrow 0 } \frac{\Delta}{8\delta} = +\infty$, i.e., no finite time algorithm can guarantee to find an $\epsilon$-stationary point. 


Before moving on to the experimental section, we would like to make several comments related to different settings. First, since the stochastic setting is strictly stronger than the deterministic setting, the stochastic variant Stochastic-INGD is applicable to the deterministic setting too. Moreover, the analysis can be extended to $q=0$, which leads to a complexity of $\cO(1/\epsilon^3 \delta)$. This is the same as the deterministic algorithm. However, the stochastic variant does not adapt to the smoothness condition. In other words, even if the function is differentiable, we will not obtain a faster convergence rate. In particular, if the function is smooth, by using the equivalence of the types of stationary points, Stochastic-INGD finds an $\epsilon$-stationary point in $\cO(1/\epsilon^5)$ while standard SGD enjoys a $\cO(1/\epsilon^4)$ convergence rate. We do not know whether a better convergence result is achievable, as our lower bound does not provide an explicit dependency on $\epsilon$; we leave this as a future research direction.





\section{Experiments}
\begin{figure}[ht]
\centering
\begin{subfigure}
	\centering
	\includegraphics[width=0.8\linewidth]{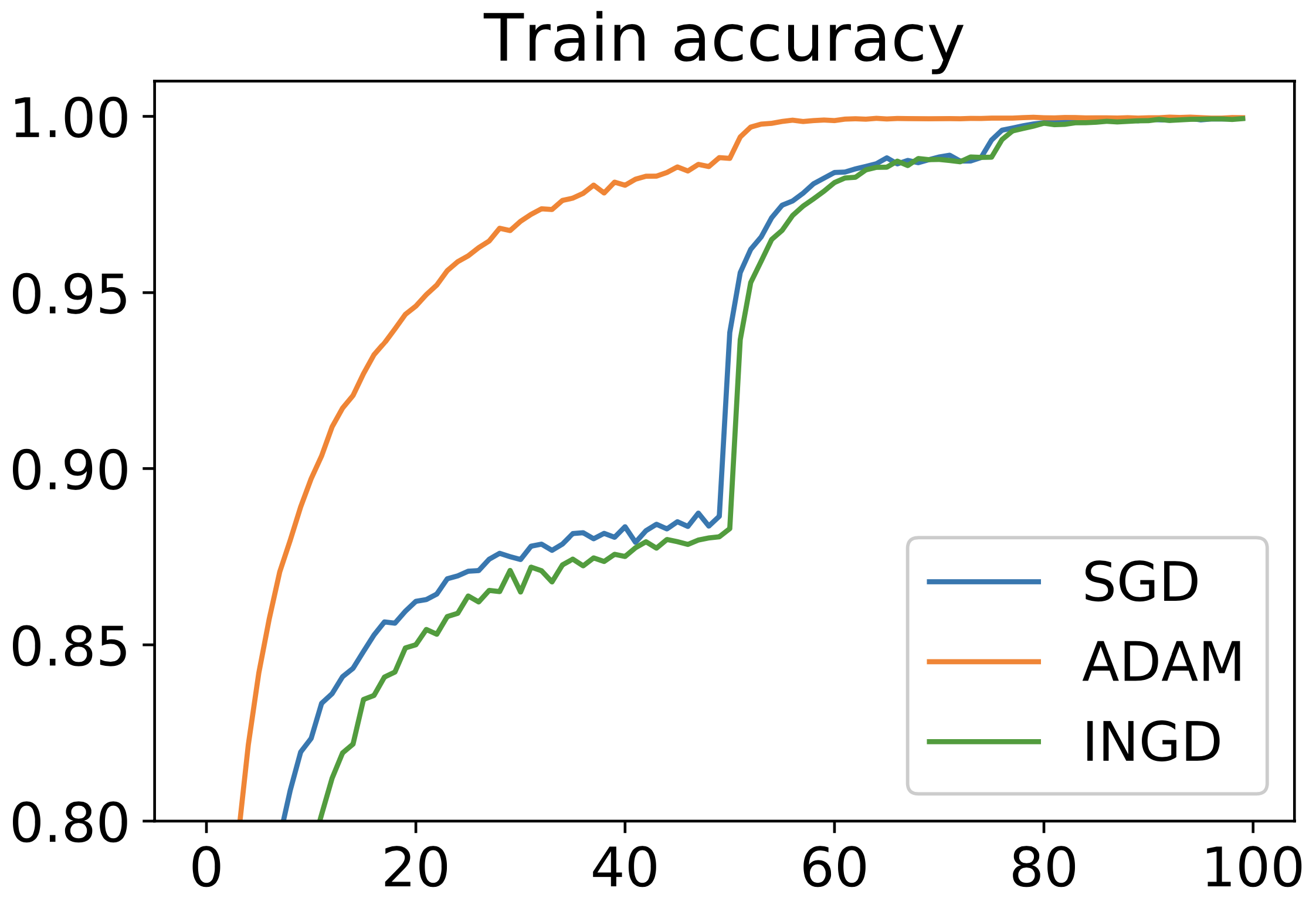}
\end{subfigure}
\begin{subfigure}
	\centering
	\includegraphics[width=0.8\linewidth]{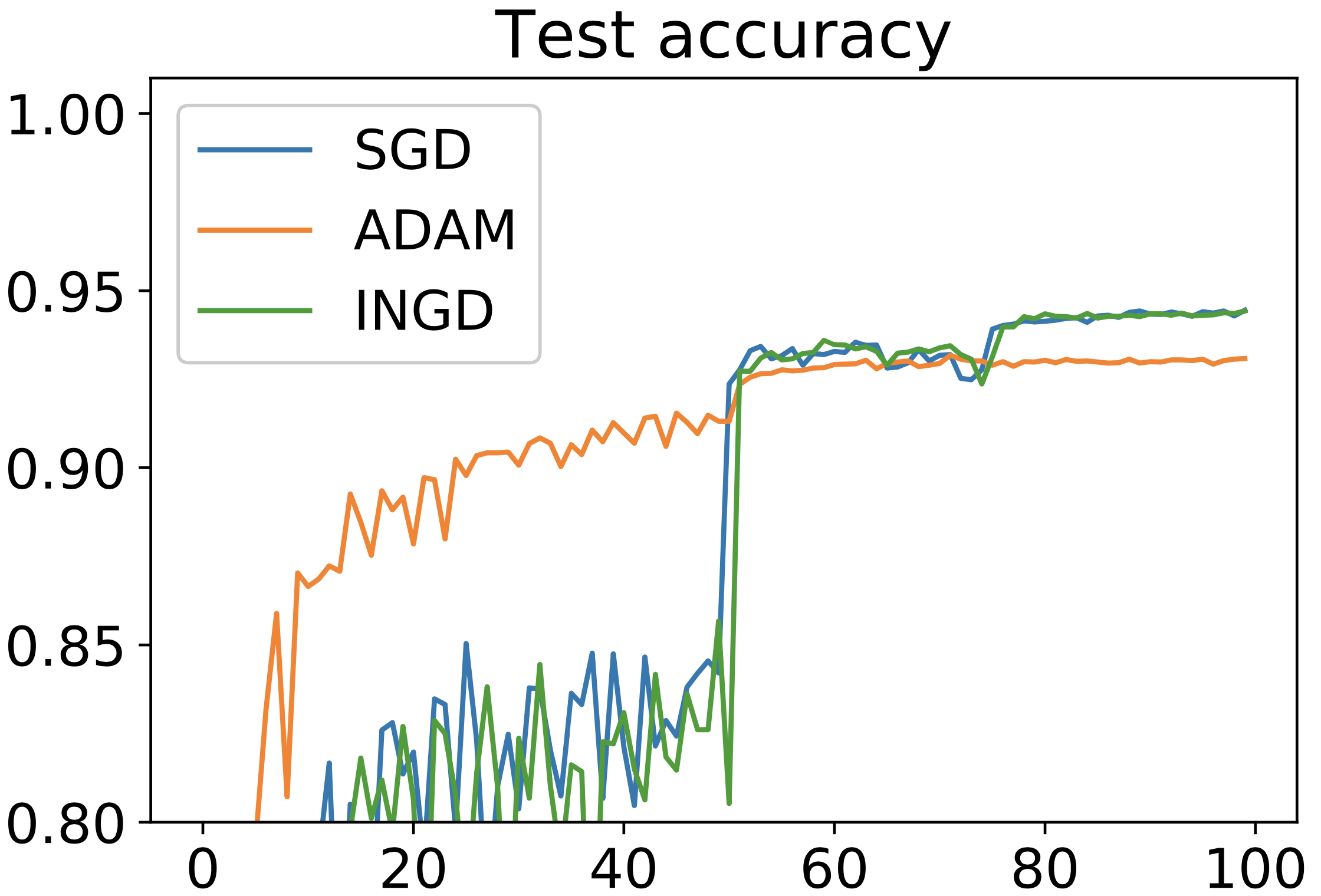}
\end{subfigure}
	\caption{Learning curve of SGD, ADAM and \ingd on training ResNet 20 on CIFAR10.}\label{fig}
\end{figure}
In this section, we evaluate the performance of our proposed algorithm Stochastic~\ingd on image classification tasks.

We train the ResNet20~\citep{he2016deep} model on the CIFAR10~\citep{krizhevsky2009learning} classification dataset. The dataset contains 50k training images and 10k test images in 10 classes. 

We implement Stochastic~\ingd in PyTorch with the inbuilt auto differentiation algorithm \cite{paszke2017automatic}. We remark that except on the kink points, the auto differentiation matches the generalized gradient oracle, which justifies our choice. We benchmark the experiments with two popular machine learning optimizers, SGD with momentum and ADAM~\cite{adam}. We train the model for 100 epochs with the standard hyper-parameters from the Github repository\footnote{https://github.com/kuangliu/pytorch-cifar}: 
\begin{itemize}[leftmargin=1em]
    \item For SGD with momentum, we initialize the learning rate as $0.1$, momentum as $0.9$ and reduce the learning rate by 10 at epoch 50 and 75. The weight decay parameter is set to $5 \cdot 10^{-4}$.
    \item For ADAM, we use constant the learning rate $10^{-3}$, betas in $(0.9, 0.999)$, and weight decay parameter $10^{-6}$ and $\epsilon = 10^{-3}$ for the best performance. 
    \item For Stochastic-\ingd, we use $\beta = 0.9$, $p=1$, $q=10$, and weight decay parameter $5 \times 10^{-4}$.
\end{itemize}
The training and test accuracy for all three algorithms are plotted in Figure~\ref{fig}. We observe that Stochastic-\ingd matches the SGD baseline and outperforms the ADAM algorithm in terms of test accuracy. The above results suggests that the experimental implications of our algorithm could be interesting, but we leave a more systematic study as future direction.


\section{Conclusions and Future Directions}
In this paper, we investigate the complexity of finding first order stationary points of nonconvex nondifferentiable functions. We focus in particular on Hadamard semi-differentiable functions, which we suspect is perhaps the most general class of functions for which the chain rule of calculus holds---see the monograph~\citep{delfour2019introduction}. We further extend the standard definition of $\epsilon$-stationary points for smooth functions into a new notion of $(\delta, \epsilon)$-stationary points. We justify our definition by showing that no algorithm can find a $(0, \epsilon)$ stationary point for any $\epsilon < 1$ in a finite number of iterations and conclude that a positive $\delta$ is necessary for a finite time analysis. Using the above definition and a more refined gradient oracle, we prove that the proposed algorithms find stationary points within $\mathcal{O}(\epsilon^{-3}\delta^{-1})$ iterations in the deterministic setting and with $\mathcal{O}(\epsilon^{-4}\delta^{-1})$ iterations in the stochastic setting. 

Our results provide the first non-asymptotic analysis of nonconvex optimization algorithms in the general Lipschitz continuous setting. Yet, they also open further questions. The first question is whether the current  dependence on $\epsilon$  in our complexity bound is optimal. A future research direction is to try to find provably faster algorithms or construct adversarial examples that close the gap between upper and lower bounds on $\epsilon$. Second, the rate we obtain in the deterministic case requires function evaluations and is randomized, leading to high probability bounds. Can similar rates be obtained by an algorithm oblivious to the function value? Another possible direction would be to obtain a deterministic convergence result. More specialized questions include whether one can remove the logarithmic factors from our bounds. Aside from the above questions on the rate, we can take a step back and ask high-level questions. Are there better alternatives to the current definition of $(\delta,\epsilon)$-stationary points? One should also investigate whether everywhere directional differentiability is  necessary.

In addition to the open problems listed above, our work uncovers another very interesting observation. In the standard stochastic, nonconvex, and smooth setting, stochastic gradient descent is known to be theoretically optimal~\citep{arjevani2019lower}, while widely used practical techniques such as momentum-based and adaptive step size methods usually lead to worse theoretical convergence rates. In our proposed setting, momentum and adaptivity naturally show up in algorithm design, and become necessary for the convergence analysis. Hence we believe that studying optimization under more relaxed assumptions may lead to theorems that can better bridge the widening theory-practice divide in optimization for training deep neural networks, and ultimately lead to better insights for practitioners.


\section{Acknowledgement}
The authors thank Ohad Shamir for helpful discussions, and for pointing out the difference between being $(\delta, \epsilon)$ stationary and being $\delta$ close to an $\epsilon$ stationary point. This work was partially supported by the MIT-IBM Watson AI Lab. SS and JZ also acknowledge support from NSF CAREER grant Number 1846088.

\bibliography{ref}
\bibliographystyle{icml2020}

\onecolumn
\appendix


\section{Proof of Lemmas in Preliminaries}
\subsection{Proof of Lemma~\ref{lemma:lebesgue-integral}}\label{proof:lebesgue-integral}
\begin{proof}
Let $g(t) = f(x+t(y-x))$ for $t\in [0,1]$, then $g$ is $L\| y-x \|$-Lipschitz implying that $g$ is absolutely continuous. Thus from the fundamental theorem of calculus (Lebesgue), $g$ has a derivative $g'$ almost everywhere, and the derivative is Lebesgue integrable such that 
\[g(t) = g(0)+\int_0^t g'(s) ds. \]
Moreover, if $g$ is differentiable at $t$, then 
\[ g'(t) = \lim_{\delta t \rightarrow 0} \frac{g(t+\delta t)-g(t)}{\delta t} =  \lim_{\delta t \rightarrow 0} \frac{f(x+ (t+\delta t)(y-x))-f(x+t(y-x))}{\delta t}  = f'(x+t(y-x), y-x). \]
Since this equality holds almost everywhere, we have  
\[f(y)-f(x) = g(1)-g(0) =\int_0^1 g'(t) dt = \int_0^1 f'(x+t(y-x), y-x) dt.  \]
\end{proof}

\subsection{Proof of Lemma~\ref{lemma:directional-generalized}}\label{proof:directional-generalized}
\begin{proof}
For any $\varphi(t) = x + td$ as given in Definition~\ref{def:direction}, let $t_k \to 0$. Denote $x_k = \varphi(t_k), \delta_k = \|x_k - x\| \to 0$. By Proposition 1.6, we know that there exists $g_{k, j} \in \cup_{y \in x + \delta_k B} \partial f(y)$ such that 
$$f(x_k) - f(x) = \iprod{g_{k, j}}{x_k - x}.$$
By the existence of directional derivative, we know that 
$$\lim_{k \to \infty} \iprod{g_{k, j}}{ d} = \lim_{k \to \infty} \frac{\iprod{g_{k, j}}{t_k d}}{t_k}= f'(x, d) $$
$g_{k, j}$ is in a bounded set with norm less than L. The Lemma follows by the fact that any accumulation point of $g_{k, j}$ is in $\partial f(x)$ due to upper-semicontinuity of $\partial f(x)$.
\end{proof}

\section{Proof of Lemmas in Algorithm Complexity}

\subsection{Proof of Theorem~\ref{thm:finite}}\label{proof:finite}

Our proof strategy is similar to Theorem 1.1.2 in \cite{nesterov2018lectures}, where we use the resisting strategy to prove lower bound. Given a one dimensional function $f$, let $x_{k}, k \in [1,K]$ be the sequence of points queried in ascending order instead of query order. We assume without loss of generality that the initial point is queried and is an element of $\{x_k\}_{k=0}^K$ (otherwise, query the initial point first before proceeding with the algorithm). 

Then we define the resisting strategy: always return
\begin{align*}
    f(x) = 0,\text{and} \quad \nabla f(x) = L.
\end{align*}

If we can prove that for any set of points $x_{k}, k \in [1,K]$, there exists two functions such that they satisfy the resisting strategy $f(x_k) = 0,\text{and} \quad \nabla f(x_k) = L, k \in [1,K]$, and that the two functions do not share any common stationary points, then we know no randomized/deterministic can return an $\epsilon-$stationary points with probability more than $1/2$ for both functions simultaneously. In other word, no algorithm that query $K$ points can distinguish these two functions. Hence we proved the theorem following the definition of complexity in~\eqref{eq:complexity} with $\delta = 0$.

All we need to do is to show that such two functions exist in the Lemma below.

\begin{lemma}
Given a finite sequence of real numbers $\{x_{k}\}_{k\in[1,K]} \in \R $, there is a family of functions $f_{\theta} \in \mathcal{F}(\Delta, L)$ such that for any $k \in [1,K]$, 
\[  f_{\theta}(x_k) = 0 \quad \text{and} \quad \nabla f_{\theta}(x_k) = L\]
and for $\epsilon$ sufficiently small, the set of $\epsilon$-stationary points of $f_{\theta}$ are all disjoint, i.e
$\{ \epsilon$-stationary points of $f_{\theta_1}\} \cap \{ \epsilon$-stationary points of $f_{\theta_2} \}= \emptyset$ for any $\theta_1 \neq \theta_2$. 
\end{lemma}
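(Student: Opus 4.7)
The plan is to build each $f_\theta$ as a piecewise affine function on $\mathbb{R}$ whose only slopes are $+L$ and $-L$, arranged so that every queried point sits on an ascending (slope $+L$) segment at height $0$. This automatically forces $f_\theta(x_k)=0$ and $\nabla f_\theta(x_k)=L$ at each queried point, matching the resisting oracle regardless of the order in which $\mathcal{A}$ submits its queries. The parameter $\theta$ will only move the interior kinks, so the oracle's responses are identical across the entire family, while the $\epsilon$-stationary set (living inside the kinks) will shift with $\theta$.

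Concretely, I would first relabel the queries in increasing order as $x_{(1)}<\cdots<x_{(K)}$ (discarding duplicates) and set $d_j:=x_{(j+1)}-x_{(j)}$. On each gap $[x_{(j)},x_{(j+1)}]$ I would place a zigzag of the form ``$+L$, then $-L$, then $+L$'': since both endpoints must be at height $0$ and carry slope $+L$, a one-line calculation forces the ascending pieces to have total length $d_j/2$ and the descending piece to have length $d_j/2$. The remaining degree of freedom is the position $\theta_j\in(0,d_j/2)$ of the first kink; the second kink is then pinned at distance $d_j/2$ beyond it. Outside $[x_{(1)},x_{(K)}]$ I would attach on each side a single ``$+L$, then constant'' tail: on the left, a plateau at height $-L\theta_0$ followed by a rise of slope $+L$ to reach $0$ at $x_{(1)}$, and symmetrically on the right. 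This makes $f_\theta$ globally $L$-Lipschitz, bounded below with $f_\theta(x_0)-\inf f_\theta$ controllable through $\theta_0$, and (being piecewise affine) directionally differentiable everywhere, so $f_\theta\in\mathcal{F}(\Delta,L)$ for a suitable choice of $\theta_0$.

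Next I would identify the $\epsilon$-stationary set. Away from kinks, $f_\theta$ is locally affine with slope $\pm L$, so $\|\nabla f_\theta\|=L>\epsilon$ whenever $\epsilon<L$. At each kink, the one-sided derivatives are $+L$ and $-L$, so by item~4 of Proposition~1.6 the generalized gradient equals $\mathrm{conv}\{-L,+L\}=[-L,L]\ni 0$, so each kink is a true stationary point. Hence the $\epsilon$-stationary set of $f_\theta$ is exactly the finite set of kinks, whose positions depend affinely on $\theta=(\theta_0,\theta_1,\ldots,\theta_{K-1})$ ranging over the open box $\prod_j(0,d_j/2)$.

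The only subtle step is to ensure that the kink sets are \emph{pairwise disjoint} across the family. I would restrict $\theta$ to a one-parameter generic line in the box, say $\theta(s)=s\cdot(c_0,c_1,\ldots,c_{K-1})$ with rationally independent constants $c_j$, and then discard the countable union of values of $s$ along which two kinks (within the same gap, across different gaps, or across different members of the family) coincide; almost every surviving $s$ yields mutually disjoint kink sets. The main obstacle is precisely this disjointness bookkeeping, but for the theorem itself it actually suffices to produce only \emph{two} such functions, which follows immediately by perturbing any single $\theta$ infinitesimally. The rest reduces to routine computation on piecewise affine functions.
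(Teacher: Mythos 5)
Your overall strategy is the same as the paper's: a piecewise-affine zigzag with slopes $\pm L$ that places every query on an ascending segment at height $0$ (so the resisting oracle is matched), with the kinks---and hence the $\epsilon$-stationary set---translated by the parameter $\theta$. However, two concrete features of your construction break the conclusion. First, the tails: a constant plateau has derivative $0$ at every interior point, so the entire half-line $(-\infty,\,x_{(1)}-\theta_0]$ (and its right-hand analogue) consists of stationary points. Every member of your family has such a plateau, so any two of your stationary sets share a common half-line and the disjointness claim fails; your assertion that ``the $\epsilon$-stationary set is exactly the finite set of kinks'' is false for the function you built. The paper avoids this by giving the left tail slope $-L$ and the right tail slope $+L$, so $f_\theta\to+\infty$ in both directions and the stationary set really is finite.

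Second, the boundedness constraint $f(x_0)-\inf_x f\le\Delta$. On a gap of width $d_j$ your zigzag rises from $0$ to $L\theta_j$ and then descends for a full length $d_j/2$, so the valley bottoms out at $L(\theta_j-d_j/2)$. Since the algorithm may query points that are arbitrarily far apart, $d_j$ can exceed $2\Delta/L$, in which case $f_\theta\notin\mathcal{F}(\Delta,L)$; choosing $\theta$ on a ``generic line through the origin'' makes the $\theta_j$ small and the violation worse, and tuning $\theta_0$ cannot help because it only controls the tail. The paper sidesteps this by orienting the zigzag the other way: the shallow local minima sit at height $-L\theta\delta$ with $\delta\le\Delta/L$ just to the left of each query point, and the tall feature in the middle of each gap is a \emph{peak}, which is harmless because only a lower bound on $f$ is imposed. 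Both defects are repairable (flip the orientation of the interior zigzag, or force $\theta_j$ close to $d_j/2$, and replace the plateaus by outward-increasing affine tails), but as written your family is neither contained in $\mathcal{F}(\Delta,L)$ nor has pairwise disjoint stationary sets. Your closing observation that two functions suffice, and your bookkeeping for avoiding kink collisions, are fine once these constructions are corrected.
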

\begin{proof}
Up to a permutation of the indices, we could reorder the sequence in the increasing order. WLOG, we assume $x_k$ is increasing. 
Let $\delta = \min\{ \min_{x_i \neq x_j} \{ |x_i -x_j|\}, \frac{\Delta}{L} \}$.
For any $0<\theta<1/2$, we define $f_\theta$ by 
\begin{align*}
    f_\theta(x) & = -L(x-x_{1}+2\theta \delta)  \quad \text{for} \quad x \in(-\infty, x_1-\theta \delta] \\
    f_\theta(x) & = L(x-x_{k}) \quad \text{for} \quad x \in \left [x_k -\theta \delta, \frac{x_k+x_{k+1}}{2}-\theta \delta \right ] \\
     f_\theta(x) & = -L(x-x_{k+1}+2\theta \delta)  \quad \text{for} \quad x \in \left [\frac{x_k+x_{k+1}}{2}-\theta \delta, x_{k+1} -\theta \delta \right ]\\ 
    f_\theta(x) &= L(x-x_K) \quad x \in [x_K+ \theta \delta, +\infty).
\end{align*} 
It is clear that $f_\theta$ is directional differentiable at all point and $\nabla f_\theta(x_k) = L $. Moreover, the minimum $f_\theta^* = -L\theta \delta \ge -\Delta$. This implies that $f_{\theta} \in \mathcal{F}(\Delta, L)$. Note that $\nabla f_\theta =L$ or $-L$ except at the local extremum. Therefore, for any $\epsilon < L$ the set of $\epsilon$-stationary points of $f_\theta$ are exactly 
\[ \{ \epsilon\text{-stationary points of }f_{\theta} \} = \{ x_{k} - \theta \delta \,\, | \,\, k \in [1,K]\} \cup \left \{ \frac{x_k+x_{k+1}}{2} - \theta \delta \,\, | \,\, k \in [1,K-1] \right \}, \]
which is clearly distinct for different choice of $\theta$.
\end{proof}

\subsection{Proof of Proposition~\ref{prop:equivalence}\label{proof:prop-equiv}}
\begin{proof}
When $x$ is $(\frac{\epsilon}{3L}, \frac{\epsilon}{3})$ stationary, we have $d(0, \partial f(x+ \frac{\epsilon}{3L}B)) \le \frac{\epsilon}{3}$. By definition, we could find $g \in conv( \cup_{y \in x+ \frac{\epsilon}{3L} B} \nabla f(y) )$ such that $\| g \| \le 2\epsilon/3$. This means, there exists $x_1, \cdots,  x_k \in x+ \frac{\epsilon}{3L} B$, and $\alpha_1, \cdots, \alpha_k \in [0,1]$ such that $\alpha_1 + \cdots + \alpha_k =1$ and 
\[ g = \sum_{i=1}^k \alpha_i \nabla f(x_i) \]
Therefore 
\begin{align*}
    \| \nabla f(x) \| & \le \| g\|+ \| \nabla f(x) -g \|  \\
    & \le \frac{2\epsilon}{3}+ \sum_{i=1}^k \alpha_i \| \nabla f(x) - \nabla f(x_k) \|  \\
    & \le \frac{2\epsilon}{3}+ \sum_{i=1}^k \alpha_i L \|x - x_k \| \\
    & \le \frac{2\epsilon}{3}+ \sum_{i=1}^k \alpha_i L \frac{\epsilon}{3L} = \epsilon.
\end{align*}
Therefore, $x$ is an $\epsilon$-stationary point in the standard sense. 
\end{proof}

\subsection{Proof of Lemma~\ref{lemma:approx}}\label{proof:lemma-approx}
\begin{proof}
    First, we show that the limit exists. By Lipschitzness and Jenson inequality, we know that $\partial f(x + \delta_{k+1}B)$ lies in a bounded ball with radius $L$. For any sequence of $\{\delta_k\}$ with $\delta_k \downarrow 0$,  we know that $\partial f(x + \delta_{k+1}B) \subseteq \partial f(x + \delta_{k}B).$ Therefore, the limit exists by the monotone convergence theorem. \\

    Next, we show that $\lim_{\delta \downarrow 0} \partial f(x+\delta B) = \partial f(x).$ For one direction, we show that $\partial f(x) \subseteq  \lim_{\delta \downarrow 0} \partial f(x+\delta B) $. This follows by proposition 1.5 and the fact that 
    \begin{align*}
    \cup_{y \in x + \delta B}\partial f(y) \subseteq \text{conv}(\cup_{y \in x + \delta B}\partial f(y))  =  \partial f(x+\delta B).
    \end{align*}

    Next, we show the other direction $  \lim_{\delta \downarrow 0} \partial f(x+\delta B) \subseteq \partial f(x)$. By upper semicontinuity, we know that
    for any $\epsilon > 0$, there exists $\delta > 0$ such that
    \begin{align*}
     \cup_{y \in x + \delta B}\partial f(y) \subseteq \partial f(x) + \epsilon B.
    \end{align*}
    Then by convexity of $\partial f(x)$ and $\epsilon B$, we know that their Minkowski sum $\partial f(x) + \epsilon B$ is convex. Therefore, we conclude that for any $\epsilon > 0$, there exists $\delta > 0$ such that
    \begin{align*}
     \partial f(x+\delta B) = \text{conv}(\cup_{y \in x + \delta B}\partial f(y)) \subseteq \partial f(x) + \epsilon B.
    \end{align*}

\end{proof}

\section{Proof of Theorem~\ref{thm:det}}\label{proof:det}


Before we prove the theorem, we first analyze how many times the algorithm iterates in the while loop.

\begin{lemma} \label{lemma:det-while}
Let $K = \frac{48L^2}{\epsilon^2}$. Given $t \in [1,T]$, 
\[\E[\|m_{t,K}\|^2] \le \frac{\epsilon^2}{16}. \]
where for convenience of analysis, we define $m_{t,k} = 0$ for all $k>k_0$ if the $k$-loop breaks at $(t,k_0)$. Consequently, for any $\gamma<1$, with probability $1 - \gamma$, there are at most $log(1/\gamma)$ restarts of the while loop at  the $t$-th iteration. 
\end{lemma}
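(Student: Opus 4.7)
The plan is to prove a one-step contraction of the form $\E[\|m_{t,k+1}\|^2 \mid m_{t,k}] \le \|m_{t,k}\|^2(1 - \|m_{t,k}\|^2/(3L^2))$ whenever the inner loop at step $k$ does not break, iterate it to a $1/k$-style decay, tune $K = 48L^2/\epsilon^2$, and then convert the expectation bound into a high-probability restart bound via Markov plus a geometric tail argument over independent while-loop attempts.

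The main work is the one-step bound. I would expand $\|m_{t,k+1}\|^2 = \beta^2\|m_{t,k}\|^2 + 2\beta(1-\beta)\iprod{g_{t,k+1}}{m_{t,k}} + (1-\beta)^2\|g_{t,k+1}\|^2$ and control the cross term by a path-integral argument: writing $y_{t,k+1} = x_t + u(x_{t,k}-x_t)$ with $u\sim U[0,1]$ and using positive homogeneity of the directional derivative together with the oracle identity $\iprod{g_{t,k+1}}{-m_{t,k}} = f'(y_{t,k+1};-m_{t,k})$, Lemma~\ref{lemma:lebesgue-integral} applied to the segment $[x_t,x_{t,k}]$ yields $\E_u[\iprod{g_{t,k+1}}{m_{t,k}}] = \tfrac{\|m_{t,k}\|}{\delta}(f(x_t)-f(x_{t,k}))$. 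Since the descent condition failed at this $k$ (else we would have broken out of the inner loop), the right-hand side is at most $\|m_{t,k}\|^2/4$. The prescribed $\beta_{t,k} = (4L^2 - \|m_{t,k}\|^2)/(4L^2 + 2\|m_{t,k}\|^2)$ is exactly the minimizer in $\beta$ of the resulting quadratic upper bound (using $\|g_{t,k+1}\|\le L$), and a short calculation then gives the advertised conditional contraction. Taking outer expectations and applying Jensen's inequality (since $x\mapsto x(1-x/(3L^2))$ is concave on $[0,L^2]$) produces $a_{k+1}\le a_k(1-a_k/(3L^2))$ with $a_k := \E[\|m_{t,k}\|^2]$. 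Inverting gives $1/a_K \gtrsim K/(3L^2)$, so $a_K\le \epsilon^2/16$ at $K=48L^2/\epsilon^2$. The convention $m_{t,k}\equiv 0$ after a break keeps the bound valid unconditionally, since once the sequence hits zero it stays there.

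Finally, Markov's inequality gives $\Pr[\|m_{t,K}\|>\epsilon]\le 1/16$, so each pass through the while loop fails to exit with probability at most $1/16$. Because each new pass resamples the interpolation points $y_{t,k+1}$ independently while $x_t$ and the re-initialized $m_{t,1}$ are deterministic functions of the previous iterate, the number of restarts is stochastically dominated by a geometric random variable with parameter $15/16$, yielding at most $O(\log(1/\gamma))$ restarts with probability at least $1-\gamma$. The main obstacle I anticipate is the cross-term bound: this is the one place where nonsmoothness really matters, and it relies on three ingredients working together --- the directional-derivative oracle from Assumption~\ref{assump}(a), the path-integral representation of Lemma~\ref{lemma:lebesgue-integral}, and the fact that the interpolation $y_{t,k+1}$ is sampled along \emph{exactly} the segment $[x_t,x_{t,k}]$ on which the descent condition is tested. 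The subsequent optimization over $\beta$ to extract the precise contraction factor is routine but must be done carefully to match the stated $\beta_{t,k}$.
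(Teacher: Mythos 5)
Your proposal follows essentially the same route as the paper's proof: the same expansion of $\|m_{t,k+1}\|^2$, the same cross-term identity $\E_u[\iprod{g_{t,k+1}}{m_{t,k}}]=\tfrac{\|m_{t,k}\|}{\delta}\bigl(f(x_t)-f(x_{t,k})\bigr)$ obtained from the uniform interpolation plus Lemma~\ref{lemma:lebesgue-integral}, the same minimization over $\beta$ yielding the stated $\beta_{t,k}$ and the contraction $\E[\|m_{t,k+1}\|^2]\le\bigl(1-\E[\|m_{t,k}\|^2]/(3L^2)\bigr)\E[\|m_{t,k}\|^2]$ via Jensen, the same $O(1/k)$ recursion, and the same Markov-plus-geometric-tail argument for the restart count. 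The argument is correct; nothing further to add.
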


\begin{proof}
Let $\fF_{t,k}= \sigma(y_{t,1}, \cdots, y_{t,k+1})$, then $x_{t,k}, m_{t,k} \in \fF_{t,k}.$ We denote $D_{t,k}$ as the event that $k$-loop does not break at $x_{t,k}$, i.e. $\|m_{t,k}\| > \epsilon$ and $f(x_{t,k}) -f(x_t) >  -\frac{\delta \| m_{t,k} \|}{4}$. It is clear that $D_{t,k} \in \fF_{t,k}$. 

Let $\gamma(\lambda) = (1-\lambda) x_{t} + \lambda x_{t, k}, \ \lambda \in [0,1]$. Note that $\gamma'(\lambda) =  x_{ t,k} - x_{ t} = - \delta \frac{m_{t,k}}{\|m_{t,k}\|}$.
Since $y_{t,k+1}$ is uniformly sampled from line segment $[x_t, x_{t,k}]$, we know 
$$\E[\iprod{g_{t,k+1}}{x_{t,k} - x_t} | \fF_{t,k}] = \int_0^1 f'(\gamma(t), x_{t,k} - x_t) dt = f(x_{t,k}) - f(x_t) $$
where the second equality comes from directional differentiability.
Since $x_{ k+1} - x_{ k} = - \delta \frac{m_{t,k}}{\|m_{t,k}\|}$, we know that
\begin{align}
    \E[\iprod{g_{t,k+1}}{m_{t,k}} | \fF_{t,k} ] = -\frac{ \|m_{t,k}\|}{\delta} (f(x_{t,k}) - f(x_t)).
\end{align}
By construction  $m_{t,k+1} = \beta m_{t,k} + (1-\beta) g_{t,k+1}$ under $D_{t,k} \cap \cdots \cap D_{t,1}$, and  $m_{t,k+1}=0$ otherwise.
Therefore,
\begin{align*}
    & \E[\|m_{t,k+1}\|^2 | \fF_{t,k} ]  \\
     = &  \E [\|\beta m_{t,k} + (1-\beta)g_{t,k+1}\|^2 \mathds{1}_{D_{t,k} \cap \cdots \cap D_{t,1}} | \fF_{t,k} ] \\
     \le&  \left (\beta^2 \|m_{t,k}\|^2 + (1-\beta)^2 L^2 + 2\beta(1-\beta)\E[\iprod{g_{t,k+1}}{m_{t,k}} | \fF_{t,k} ] \right ) \mathds{1}_{D_{t,k} \cap \cdots \cap D_{t,1}}  \\
    \le & \beta^2 \|m_{t,k}\|^2 + (1-\beta)^2 L^2 - 2\beta(1-\beta)\frac{\|m_{t,k}\|}{\delta} ( f(x_{t,k})-f(x_t)) \mathds{1}_{D_{t,k} \cap \cdots \cap D_{t,1}} \\ 
    \le &  \beta^2 \|m_{t,k}\|^2 + (1-\beta)^2 L^2 + 2\beta(1-\beta)\frac{\|m_{t,k}\|^2}{4} \\
\end{align*}
where in the third line, we use the fact $\beta,  D_{t,k} \cap \cdots \cap D_{t,1} \in \fF_{t,k}$; in the fourth line we use the fact under $D_{t,k}$, $f(x_{t,k})-f(x_t) \ge - \frac{\delta \| m_{t,k}\|}{4}$.
The last equation is a quadratic function with respect to $\beta$, which could be rewritten as 
\[h(\beta) = \beta^2(\frac{\|m_{t,k}\|^2}{2} + L^2) -2 \beta (L^2 - \frac{\|m_{t,k}\|^2}{4}) + L^2.\] 
It achieves the minimum at $\beta = \frac{4L^2 - \|m_{t,k}\|^2}{ 4L^2+ 2\|m_{t,k}\|^2}$, which belongs to $\fF_{t,k}$. Since $\|m_{t,k}\| \le L $, we have
\[ h^* = \frac{L^2}{L^2+ \frac{\|m_{t,k}\|^2}{2}} \|m_{t,k}\|^2  \le  \left(1 - \frac{\|m_{t,k}\|^2}{3L^2} \right) \|m_{t,k}\|^2\]
Therefore,
\begin{align*}
    & \E [\|m_{t,k+1}\|^2 ] \\
 =  & \E [ \E[\|m_{t,k+1}\|^2 | \fF_{t,k}] ] \\
 \le & \E \left[ \left(1 - \frac{\|m_{t,k}\|^2}{3L^2} \right) \|m_{t,k}\|^2 \right ] \\
 \le & \left(1 - \frac{\E[\|m_{t,k}\|^2]}{3L^2} \right) \E[\|m_{t,k}\|^2]
\end{align*}

where the last inequality follows from Jensen's inequality under the fact that the function $x \rightarrow (1-x/3L^2)x$ is concave.
Now consider the sequence $v_k = \E[\|m_{t,k}\|^2]/L^2 \in [0,1]$, we get
$$v_{k+1} \le  v_k - v_k^2/3 \quad \implies \quad \frac{1}{v_{k+1}} \ge \frac{1}{v_k - v_k^2/3} \ge \frac{1}{v_k} + \frac{1}{3}.$$ 
Knowing that $v_1 \le 1$, we therefore have 
$$v_{k} \le \frac{3}{k + 2}.$$

When $K > \frac{48 L^2}{\epsilon^2}$, we have
$\E[\|m_{t,K}\|^2 ] \le \frac{\epsilon^2}{16}.$
Therefore, by Markov inequality, $\mathcal{P}\{\|m_{t,K}\| \ge \epsilon \} \le 1/4$. In other word, the while-loop restart with probability at most $1/4$.  
Therefore, with probability $1-\gamma$, there are at most $\log(1/\gamma)$ restarts.
\end{proof}

Now we are ready to prove the main theorem.

\begin{proof}[Proof of Theorem \ref{thm:det}]
We notice that $m_{t,k}$ is always a convex combinations of generalized gradients within the $\delta$ ball of $x_k$, i.e.
\[ m_{t,k} \in     \partial f(x_t+\delta B) = \text{conv}(\cup_{y \in x_t + \delta B}\partial f(y)) \]
Therefore, if at any $t,k$, $\|m_{t,k}\| \le \epsilon$, then the corresponding $x_t$ is a $(\delta, \epsilon)$ approximate stationary point. To show that our algorithm always find a $\|m_{t,k}\| \le \epsilon$, we need to control the number of times the descent condition is satisfied, which breaks the while-loop without satisfying $\|m_{t,k}\| \le \epsilon$. Indeed, when the descent condition holds, we have
\[ f(x_{t,k})- f(x_t) \le -\frac{\delta \| m_{t,k} \|}{4} < -\frac{\delta \epsilon}{4},  \]
where we use the fact $\| m_{t,k} \| > \epsilon$, otherwise, the algorithm already terminates. 
Consequently, there are at most $\frac{4\Delta}{\delta\epsilon}-1= T-1$ iterations that the descent condition holds. As a result, for at least one $t$ , the while-loop ends providing a $(\delta, \epsilon)$ approximate stationary point.

By Lemma~\ref{lemma:det-while}, we know that with probability $1-\frac{\gamma \delta\epsilon}{4\Delta}$, the $t$-th iteration terminates in $\log(\frac{4\Delta}{\gamma\delta\epsilon})$ restarts. 
Consequently, with probability $1-\gamma$, the algorithm returns a $(\delta, \epsilon)$ approximate stationary point using
$$ \frac{192\Delta L^2}{\epsilon^3\delta}\log\left(\frac{4\Delta}{\gamma\delta\epsilon}\right)  \ \ \text{oracle calls.}$$

\end{proof}

\section{Proof of Theorem~\ref{thm:sto}}\label{proof:sto}

		

Stochastic INGD has convergence guarantee as stated in the next theorem.

\begin{theorem}

Under the stochastic Assumption~\ref{assump}, the Stochastic INGD algorithm in Algorithm~\ref{algo:sto} with parameters $ \beta = 1 - \frac{\epsilon^2}{64 G^2}$, $p  = \frac{ 64 G^2 \ln(16G/\epsilon)}{\delta \epsilon^2 }$, $q  = 4Gp $, $T = \frac{2^{16} G^3 \Delta \,\, \text{ln}(16G/\epsilon)}{ \epsilon^4 \delta}\max \{1, \frac{  G \delta}{8\Delta} \}$, $K = p\delta$ has algorithm complexity upper bounded by
$$ \frac{2^{16} G^3 \Delta \,\, \text{ln}(16G/\epsilon)}{ \epsilon^4 \delta}\max \{1, \frac{  G \delta}{8\Delta} \} = \tilde{O} \left (\frac{G^3 \Delta}{\epsilon^4 \delta} \right ). $$

\end{theorem}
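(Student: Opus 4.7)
The plan is to prove the average bound $\tfrac{1}{T}\sum_{t=1}^{T}\E[\|m_t\|]\le \epsilon/4$ first, and then convert it into $(\delta,\epsilon)$-stationarity of the randomly chosen output $x_{\max(i-K,1)}$. The proof rests on two independent pillars: (i) an in-expectation descent identity produced by sampling $y_{t+1}$ uniformly on the segment $[x_t,x_{t+1}]$, and (ii) a bias bound showing that the momentum $\E[m_t \mid \mathcal{F}_{t-K}]$ lies within $\epsilon/16$ of the Goldstein subdifferential $\partial f(x_{t-K}+\delta B)$. The main analytic step is a telescoping bound tailored to the adaptive step size $\eta_t=1/(p\|m_t\|+q)$; the final step is a Jensen-plus-Markov argument to extract a stationary output.

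For the descent identity, I would apply Lemma~\ref{lemma:lebesgue-integral} along $\gamma(s)=x_t-s\eta_t m_t$ and use positive homogeneity of the directional derivative to write $f(x_{t+1})-f(x_t)=\eta_t \int_0^1 f'(\gamma(s),-m_t)\,ds$. Since $y_{t+1}$ is uniform on that segment and the oracle was queried in direction $-m_t$, the oracle property $\iprod{\E[g(y) \mid y]}{-m_t}=f'(y,-m_t)$ yields
\begin{equation*}
\E[\iprod{g(y_{t+1})}{m_t} \mid \mathcal{F}_t] \;=\; \frac{1}{\eta_t}(f(x_t)-f(x_{t+1})),
\end{equation*}
the nonsmooth analog of the smooth descent lemma, and the only route by which the (unavailable) function value enters the analysis. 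For the bias bound, unroll the momentum as $m_t=\beta^K m_{t-K}+(1-\beta)\sum_{j=0}^{K-1}\beta^j g(y_{t-j})$: since $\|x_{s+1}-x_s\|=\eta_s\|m_s\|\le 1/p$ and $K=p\delta$, every $y_{t-j}$ with $j<K$ lies in $x_{t-K}+\delta B$, so each conditional mean $\E[g(y_{t-j}) \mid \mathcal{F}_{t-K}]$ belongs to the convex set $\partial f(x_{t-K}+\delta B)$. Using $(1-\beta)\sum_{j=0}^{K-1}\beta^j=1-\beta^K$, the vector $\E[m_t \mid \mathcal{F}_{t-K}]$ is a $(1-\beta^K)$-convex combination of elements of that set plus a stray $\beta^K m_{t-K}$ of norm at most $\beta^K G$, which the parameter choice $\beta=1-\epsilon^2/(64G^2)$ with $K\ge 64G^2\log(16G/\epsilon)/\epsilon^2$ pushes below $\epsilon/16$.

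The main obstacle is the telescoping descent. Squaring the momentum update, taking conditional expectation, and substituting the descent identity yields
\begin{equation*}
\E[\|m_{t+1}\|^2 \mid \mathcal{F}_t]\le \beta^2\|m_t\|^2+2\beta(1-\beta)\frac{f(x_t)-f(x_{t+1})}{\eta_t}+(1-\beta)^2 G^2,
\end{equation*}
which I rearrange to $(1-\beta^2)\|m_t\|^2 \le \|m_t\|^2 - \E[\|m_{t+1}\|^2 \mid \mathcal{F}_t] + 2\beta(1-\beta)(f(x_t)-f(x_{t+1}))/\eta_t + (1-\beta)^2 G^2$ and multiply through by $\eta_t$ to cancel the nuisance $1/\eta_t$. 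Summing over $t$ is not an exact telescope because $\eta_t\neq\eta_{t+1}$; however $|\eta_{t+1}-\eta_t|\le (1-\beta)/(8Gp)$ (from $\|m_{t+1}-m_t\|\le 2G(1-\beta)$ and $q=4Gp$), so the telescoping defect over $T$ steps is $O(\Delta)$. The result is $\sum_t \E[\eta_t\|m_t\|^2] = O(\Delta/(1-\beta))$, where the $\max\{1,G\delta/(8\Delta)\}$ factor in the stated $T$ absorbs the initialization term $\eta_1\|m_1\|^2$ in the regime when $G\delta$ dominates $\Delta$.

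To finish, since $\|m_t\|\le G$ and $q=4Gp$ give $\eta_t\|m_t\|^2 \ge \|m_t\|^2/(8Gp)$, dividing yields $\tfrac{1}{T}\sum\E[\|m_t\|^2]\le \epsilon^2/16$ once the parameter values are substituted, and Cauchy--Schwarz upgrades this to $\tfrac{1}{T}\sum\E[\|m_t\|]\le\epsilon/4$. Sampling $i$ uniformly from $\{1,\dotsc,T\}$ and outputting $x_{\max(i-K,1)}$, Jensen gives $\E\|\E[m_i \mid x_{i-K}]\|\le \tfrac{1}{T}\sum\E[\|m_t\|]\le\epsilon/4$; combined with the $\epsilon/16$ bias bound, this places some $g^\star\in\partial f(x_{i-K}+\delta B)$ within expected norm $5\epsilon/16<\epsilon/3$. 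Markov's inequality then promotes this to $\|g^\star\|\le\epsilon$ with probability at least $2/3$, establishing the advertised $\tilde{\cO}(G^3\Delta/(\epsilon^4\delta))$ oracle complexity.
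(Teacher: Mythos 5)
Your overall architecture is the same as the paper's: the descent identity $\E[\iprod{g(y_{t+1})}{\eta_t m_t}\mid\mathcal{G}_t]=f(x_t)-f(x_{t+1})$ from uniform sampling on the segment plus the oracle property, the momentum-unrolling bias bound with $\beta^K\le\epsilon/(16G)$ and $K/p\le\delta$, and the Jensen/Markov extraction at the end all match. The gap is in the telescoping step for the adaptive step size, and it is quantitative but fatal as written. You bound the per-step telescoping defect by $|\eta_{t+1}-\eta_t|\cdot G^2\le\frac{(1-\beta)G}{8p}$, conclude the total defect is $O(\Delta)$, and hence that $\sum_t\E[\eta_t\|m_t\|^2]=O(\Delta/(1-\beta))$. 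But the genuine descent term $2\beta(1-\beta)\Delta$ contributes only $O(\Delta)$ to $\sum_t\E[\eta_t\|m_t\|^2]$ after dividing by $1-\beta^2$; it is your additive $O(\Delta)$ defect that, after the same division, inflates to $\Delta/(1-\beta)$ and dominates. Tracing the constants through: with $p=\tfrac{64G^2\ln(16G/\epsilon)}{\delta\epsilon^2}$, $1-\beta=\tfrac{\epsilon^2}{64G^2}$ and the stated $T$, one gets $\tfrac{Gp\Delta}{T(1-\beta)}=\tfrac{G^2}{16}$, so your final step yields $\tfrac{1}{T}\sum_t\E\|m_t\|^2\lesssim G^2$ rather than $\epsilon^2/16$ — off by exactly the factor $1/(1-\beta)=64G^2/\epsilon^2$.

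The fix is to bound the defect not by an absolute constant but by a small multiple of the left-hand-side quantities themselves, so that it can be absorbed. This is precisely what the paper's inequality does:
\begin{equation*}
\frac{p\|m_{t+1}\|^2(\|m_{t}\|-\|m_{t+1}\|)}{(p\|m_{t}\|+q)(p\|m_{t+1}\|+q)}\;\le\;(1-\beta)\,\frac{\|m_{t+1}\|^2}{p\|m_{t+1}\|+q}+\frac{(1-\beta)p\|g(y_{t+1})\|}{q}\,\frac{\|m_{t}\|^2}{p\|m_{t}\|+q},
\end{equation*}
after which $\E[\|g(y_{t+1})\|\mid\mathcal{G}_t]\le G$ and $pG/q\le\beta/2$ make both pieces absorbable into the $(1-\beta^2)$ coefficient, leaving $\tfrac{\beta-\beta^2}{2}\sum_t\E[\eta_t\|m_t\|^2]\le 2\beta(1-\beta)\Delta+\tfrac{\beta^2G^2}{q}+T(1-\beta)^2\tfrac{G^2}{q}$, i.e.\ $\sum_t\E[\eta_t\|m_t\|^2]=O(\Delta)$ plus lower-order terms, with no $\Delta/(1-\beta)$. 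Your ingredient $\|m_t\|-\|m_{t+1}\|\le(1-\beta)(\|m_t\|+\|g(y_{t+1})\|)$ is exactly the right one; you must simply keep the factor $\|m_{t+1}\|^2/(p\|m_{t+1}\|+q)$ attached to it instead of replacing it by $G^2/q$. A secondary point: conditioning on $\mathcal{G}_t$ works cleanly in the paper's version because the surviving factor $\|m_t\|^2/(p\|m_t\|+q)$ is $\mathcal{G}_t$-measurable, whereas your pairing of $\|g(y_{t+1})\|$ with $\|m_{t+1}\|^2$ couples two quantities that both depend on the fresh stochastic gradient (which is bounded by $G$ only in second moment, not almost surely), so the expectation cannot be split the way you need.
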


\begin{proof} 
First, we are going to show that  
\begin{equation}\label{eq: m-avg}
    \frac{1}{T} \sum_{t=1}^T \E [ \| m_{t}\|] \le \epsilon/4.
\end{equation}


From construction of the descent direction, we have
\begin{align}\label{eq:sto-proof-m}
   \|m_{t+1}\|^2 
   = (1-\beta)^2\|g(y_{t+1})\|^2 + 2\beta(1-\beta)\iprod{g(y_{t+1})}{m_{t}} + 
   \beta^2 \|m_{t}\|^2 .
\end{align}
Multiply both side by $\eta_{t}$ and sum over $t$, we get
\begin{align}\label{eq:=0}
   0 = (1-\beta)^2 \underbrace{\sum_{t=1}^T \eta_{t} \|g(y_{t+1})\|^2}_{\text{i}} + 2\beta(1 - \beta) \underbrace{\sum_{t=1}^T  \iprod{g(y_{t+1})}{\eta_{t} m_{t}}}_{\text{ii}}  +&  \underbrace{\sum_{t=1}^{T}  \eta_{t}(-\|m_{t+1}\|^2  + \beta^2 \|m_{t}\|^2 )}_{\text{iii}}.
\end{align}


We remark that at each iteration, we have two randomized/stochastic procedure: first we draw $y_{t+1}$ randomly between the segment $[x_t,x_{t+1}]$, second we draw a stochastic gradient at $y_{t+1}$.
For convenience of analysis, we denote $\mathcal{G}_{t}$ as the sigma field generated by $g(y_t)$, and $\mathcal{Y}_{t}$ as the sigma field generated by $y_{t}$. Clearly, $\mathcal{G}_{t} \subset \mathcal{Y}_{t+1} \subset \mathcal{G}_{t+1}$. By definition $\eta_{t}$ is determined by $m_{t}$, which is further determined by $g_t$. Hence, the vectors $m_t$, $\eta_t$ and $x_{t+1}$ are $\mathcal{G}_{t}$-measurable. 

Now we analyze each term one by one.

\textbf{Term i:} This term could be easily bound by
\begin{align}
    \E[\eta_{t} \|g(y_{t+1})\|^2] \le \frac{1}{q}\E[ \|g(y_{t+1})\|^2] =  \frac{1}{q} \E[\E[ \|g(y_{t+1})\|^2| \Y_{t+1}]] \le \frac{G^2}{ q}
\end{align}

\textbf{Term ii:}
 Note that $\eta_t m_t = x_t -x_{t+1}$, we have
\begin{align*}
   \E[\iprod{g(y_{t+1})}{\eta_{t} m_{t}}\ |\ \mathcal{G}_{t}] &= \E[\E[\iprod{g(y_{t+1})}{x_t-x_{t+1}}\ | \ \mathcal{Y}_{t+1} ]|\ \mathcal{G}_{t}] \\
   &= \E[ f'(y_{t+1}; x_{t}-x_{t+1})| \ \mathcal{G}_{t}] \\
   &= \int_{[0,1]} f'(x_{t+1}+ \lambda(x_{t}-x_{t+1}); x_{t}-x_{t+1}) d\lambda \\
   & = f(x_{t}) - f(x_{t+1}),
\end{align*}
where the second line we use the property of the oracle given in Assumption 1(b). Thus by taking the expectation, we have 
\[ \sum_{t=1}^T  \E[\iprod{g(y_{t+1})}{\eta_{t}m_t}] = \E[ f(x_1) -f(x_{T+1})] \le \Delta \]


\textbf{Term iii:} we would like to develop a telescopic sum for the third term, however this is non-trivial since the stepsize $\eta_t$ is adaptive. Extensive algebraic manipulation is involved.
\begin{align}\label{eq:term2}
     & \sum_{t=1}^{T} \eta_{t}(-\|m_{t+1}\|^2  + \beta^2 \|m_{t}\|^2 ) \nonumber \\
     = & \sum_{t=1}^{T} \frac{-\|m_{t+1}\|^2}{p\|m_{t}\| + q} + \beta^2  \sum_{t=1}^{T} \frac{ \|m_{t}\|^2}{p\|m_{t}\| + q} \nonumber \\
     = & \sum_{t=1}^T \left ( \frac{-\|m_{t+1}\|^2}{p\|m_{t}\| + q} + \frac{\|m_{t+1}\|^2}{p\|m_{t+1}\| + q}  \right ) -\sum_{t=1}^T \frac{\|m_{t+1}\|^2}{p\|m_{t+1}\| + q} +\beta^2 \sum_{t=1}^T \frac{ \|m_{t}\|^2}{p\|m_{t}\| + q} \nonumber \\
     =& \sum_{t=1}^T  \frac{p\|m_{t+1}\|^2(\|m_{t}\| - \|m_{t+1}\|)}{(p\|m_{t}\| + q) (p\|m_{t+1}\|+ q)}  +  \beta^2 \frac{ \|m_{1}\|^2}{p\|m_{1}\| + q} +(\beta^2-1) \sum_{t=2}^{T+1} \frac{\|m_{t}\|^2}{p\|m_{t}\| + q} 
\end{align}
The first equality follows by $\eta_{t} = \tfrac{ 1}{p\|m_{t}\| + q} $. The second equality subtract and add the same terms $\tfrac{\|m_{t+1}\|^2}{p\|m_{t+1}\| + q}$. The last equality regroups the terms.  

We now prove the first term in ~\eqref{eq:term2} admits the following upper bound:
\begin{equation}\label{ineq:term ii}
   \frac{p\|m_{t+1}\|^2(\|m_{t}\| - \|m_{t+1}\|)}{(p\|m_{t}\| + q) (p\|m_{t+1}\|+ q)}  \le (1-\beta) \frac{\|m_{t+1}\|^2}{ p\|m_{t+1}\|+ q} + \frac{(1-\beta)p\| g(y_{t+1})\|}{q}\frac{\|m_{t}\|^2}{ p\|m_{t}\|+ q}
\end{equation}
Note that if $\| m_{t+1} \| \ge \|m_{t}\|$ then the inequality trivially holds. Thus, we only need to consider the case when $\| m_{t+1} \| \le \|m_{t}\|$. By triangle inequality, 
\begin{align*}
    \|m_{t}\| - \|m_{t+1}\| &\le \|m_{t} -  m_{t+1}\| = (1-\beta) \| m_{t} - g(y_{t+1}) \| \\
    & \le (1-\beta)(\|m_{t}\| + \| g(y_{t+1})\|).
\end{align*}
Therefore, substitue the above inequality into lefthand side of ~\eqref{ineq:term ii} and regroup the fractions,
\begin{align*}
 \frac{p\|m_{t+1}\|^2(\|m_{t}\| - \|m_{t+1}\|)}{(p\|m_{t}\| + q) (p\|m_{t+1}\|+ q)} 
 \le  \quad & \frac{p\|m_{t+1}\|^2(1-\beta)(\|m_{t}\| +\| g(y_{t+1})\|)}{(p\|m_{t}\| + q) (p\|m_{t+1}\|+ q)} \\
 = \quad & (1-\beta)\frac{\|m_{t+1}\|^2}{ p\|m_{t+1}\|+ q} \frac{p\|m_{t}\|}{p\|m_{t}\| + q}+ \frac{(1-\beta)p\| g(y_{t+1})\|}{p\|m_{t}\| + q } \frac{\|m_{t+1}\|^2}{p\|m_{t+1}\|+ q} \\
 \le \quad& (1-\beta) \frac{\|m_{t+1}\|^2}{ p\|m_{t+1}\|+ q} + \frac{(1-\beta)p\| g(y_{t+1})\|}{q}\frac{\|m_{t}\|^2}{ p\|m_{t}\|+ q},
\end{align*}
where the last step we use the fact that $\| m_{t+1} \| \le \|m_{t}\|$ and the function $x \rightarrow x^2/(px+q)$ is increasing on $\R_+$. Now, taking expectation on both sides of (\ref{ineq:term ii}) yields 
\begin{align*}
\E\left [\frac{p\|m_{t+1}\|^2(\|m_{t}\| - \|m_{t+1}\|)}{(p\|m_{t}\| + q) (p\|m_{t+1}\|+ q)} \right ] & \le (1-\beta)\E\left [ \frac{\|m_{t+1}\|^2}{ p\|m_{t+1}\|+ q}\right] + \frac{p(1-\beta)}{q} \E \left [ \| g(y_{t+1}) \| \frac{\|m_{t}\|^2}{ p\|m_{t}\|+ q}\right ] \\
& =  (1-\beta)\E\left [ \frac{\|m_{t+1}\|^2}{ p\|m_{t+1}\|+ q}\right] + \frac{p(1-\beta)}{q} \E \left[  \E \left [ \| g(y_{t+1}) \| | \G_t \right] \frac{\|m_{t}\|^2}{ p\|m_{t}\|+ q}\right ] \\
& \le (1-\beta)\E\left [ \frac{\|m_{t+1}\|^2}{ p\|m_{t+1}\|+ q}\right] + \frac{p(1-\beta)G}{q} \E \left [ \frac{\|m_{t}\|^2}{ p\|m_{t}\|+ q}\right ] \\
& \le (1-\beta)\E\left [ \frac{\|m_{t+1}\|^2}{ p\|m_{t+1}\|+ q}\right] + \frac{\beta(1-\beta)}{2} \E \left [ \frac{\|m_{t}\|^2}{ p\|m_{t}\|+ q}\right ] 
\end{align*}
where the third inequality follows by the fact that $\E[\|g(y_{t+1}\| |  \G_t] \le \sqrt{L^2 + \sigma^2}$ and the last inequality follows from our choice of parameters ensuring $pG/q \le \beta/2$.

Now we are ready to proceed the telescopic summing. Summing up over $t$  and  yields
\begin{align*}
     & \sum_{t=1}^{T} \E \left [ \eta_{t}(-\|m_{t+1}\|^2  + \beta^2 \|m_{t}\|^2 ) \right ] \\
    \le \,\, &   (1-\beta) \sum_{t=1}^T  \E \left [\frac{\|m_{t+1}\|^2}{p\|m_{t+1}\| + q} \right ] + \frac{\beta-\beta^2}{2
    } \sum_{t=1}^T \E \left [\frac{\|m_{t}\|^2}{p\|m_{t}\| + q} \right ]+ \beta^2 \E \left [\frac{ \|m_{1}\|^2}{p\|m_{1}\| + q} \right ] + (\beta^2-1) \sum_{t=2}^{T+1} \E \left [\frac{\|m_{t}\|^2}{p\|m_{t}\| + q} \right ]  \\
    = \,\,& \frac{\beta^2+\beta}{2} \E \left [\frac{ \|m_{ 1}\|^2}{p\|m_{1}\| + q} \right ] + \frac{\beta^2-\beta}{2} \sum_{t=2}^{T+1} \E \left [\frac{\|m_{t}\|^2}{p\|m_{t}\| + q} \right ]\\
    =  \,\,& \beta^2 \E \left [\frac{ \|m_{ 1}\|^2}{p\|m_{1}\| + q} \right ] + \frac{\beta^2-\beta}{2} \sum_{t=1}^{T+1} \E \left [\frac{\|m_{t}\|^2}{p\|m_{t}\| + q} \right ]\\
    \le \,\, &  \frac{\beta^2 G^2}{q}+ \frac{\beta^2-\beta}{2} \sum_{t=1}^{T+1} \E \left [\frac{\|m_{t}\|^2}{p\|m_{t}\| + q} \right ]
\end{align*}

The first inequality uses \eqref{ineq:term ii}. The third line  and the foruth line regroup the terms. The last line follows by $p\|m_{1}\| + q \ge q$ and  $\E[\|m_{ 1}\|^2] \le G^2$.

\textbf{Combine all term i, ii and iii in (\ref{eq:=0})} yields
\begin{align*}
    \frac{\beta - \beta^2}{2} \sum_{t=1}^{T+1} \E\left [\frac{\|m_{t}\|^2}{p\|m_{t}\| + q} \right ] &\le 2\beta(1 - \beta)  \E[f(x_{1}) - f(x_{T+1})] + \frac{\beta^2 G^2}{q} 
    + T(1-\beta)^2\frac{G^2
     }{ q}.
\end{align*}

Multiply both side by $\frac{2q}{T(\beta-\beta^2)}$ we get
\begin{align}\label{eq:K}
    \frac{1}{T} \sum_{t=1}^T \E\left [\frac{q\|m_{t}\|^2}{p\|m_{t}\| + q} \right ] &\le \frac{4q \Delta}{T}  + \frac{2\beta G^2}{T(1-\beta)} +  \frac{2(1-\beta)G^2}{\beta}
\end{align}


We may assume $\epsilon \le G$, otherwise any $x_t$ is a $(\delta,\epsilon)$-stationary point. Then by choosing $ \beta = 1 - \frac{\epsilon^2}{64 G^2}$, $p  = \frac{ 64 G^2 \ln(16G/\epsilon)}{\delta \epsilon^2 }$, $q =  \frac{ 256 G^3 \ln(16G/\epsilon)}{\delta \epsilon^2 }$, $T = \frac{2^{16} G^3 \Delta \,\, \text{ln}(16G/\epsilon)}{ \epsilon^4 \delta}\max \{1, \frac{  G \delta}{8\Delta} \}$, have
\begin{align}
    \frac{1}{T} \sum_{t=1}^T \E\left [\frac{4G\|m_{t}\|^2}{\|m_{t}\| + 4G} \right ]  \le \frac{\epsilon^2}{17}
\end{align}
Note that the function $x \rightarrow x^2/(x+4G)$ is convex, 
 thus by Jensen's inequality, for any $t$, we have 
\begin{align}
  \frac{4G  \E\left [\|m_{t}\|\right ]^2}{ \E[\|m_{t}\|] + 4G}  \le \E\left [\frac{4G\|m_{t}\|^2}{\|m_{t}\| + 4G} \right ]
\end{align}
Let's denote 
\[ m_{avg} = \frac{1}{T}\sum_{t=1}^T \E\left [ \|m_{t}\|\right ],\]
then again by Jensen's inequality,
\[ \frac{4G  m_{avg}^2}{m_{avg} + 4G} \le \frac{1}{T} \sum_{t=1}^T  \frac{4G  \E\left [\|m_{t}\|\right ]^2}{ \E[\|m_{t}\|] + 4G} \le \frac{\epsilon^2}{17} \]
Solving the quadratic inequality with respect to $m_{avg}$ and using $\epsilon \le G$, we have
\[  \frac{1}{T}\sum_{t=1}^T \E\left [ \|m_{t}\|\right ] \le \frac{\epsilon}{4}.\]

In contrast to the smooth case, we cannot directly conclude from this inequality since $m_t$ is not the gradient at $x_t$. Indeed, it is the convex combination of all previous stochastic gradients. Therefore, we still need to find a reference point such that $m_t$ is approximately in the $\delta$-subdifferential of the reference point. 
Note that 
\[ m_{t} = \sum_{i=t-K+1}^{t} \alpha_i g(y_i) +  \beta^K m_{t-K}     \]
Intuitively, when $K$ is sufficiently large, the contribution of the last term in $m_t$ is negligible. In which case, we could deduce $m_t$ is approximately in $\partial f(x_{t-K}+\delta B)$. More precisely, with  $\beta = 1 - \frac{\epsilon^2}{64G^2}$, as long as $K \ge  \frac{64G^2}{\epsilon^2} \ln( \frac{16G}{\epsilon})$, we have 
\[ \beta^K \le \frac{\epsilon}{16 G}. \]
This is  a simple analysis result using the fact that $ln(1-x) \le -x$. 
Then by Assumption on the oracle, we know that $\E[g(y_i) | \Y_i] \in \partial f(y_i)$ and $ \| y_i - x_{t-K} \|\le \frac{K}{p} \le \delta$ for any $i \in [t-K+1, t]$. Thus, 
\[  \E[g(y_i) | x_{t-K}] \in \partial f(x_{t-K} + \delta B). \]
Consequently, the convex combination 
\[ \frac{1}{\sum \alpha_i} \sum_{i=t-K+1}^{t} \alpha_i \E[g(y_i) | x_{t-K}] \in \partial f(x_{t-K} + \delta B). \]
Note that   $\sum \alpha_i = 1- \beta^K$, the above inclusion could be rewritten as 
\[ \frac{1}{1-\beta^K} (\E[m_{t} | x_{t-K}] - \beta^K m_{t-K}) \in \partial f(x_{t-K} + \delta B). \] 
This implies that conditioned on $x_{t-K}$
\begin{align*}
     d(0, \partial f(x_{t-K} + \delta B)) \le \frac{1}{1-\beta^K} \left( \|\E[m_{t}\ |\ x_{t-K}]\| + \beta^K \| m_{t-K}\| \right ) \le  \frac{1}{1-\beta^K} \left( \E[\| m_{t}\| | x_{t-K}] + \beta^K \|  m_{t-K} \| \right ).
\end{align*}
Therefore, by taking the expectation, 
\[ \E[d(0, \partial f(x_{t-K} + \delta B))] \le \frac{1}{1-\beta^K} \left( \E[\|m_{t}\|] + \beta^K G \right ) \le \frac{1}{1 - \frac{1}{16}} (\E[\| m_{t} \|] + \frac{\epsilon}{16} )  = \frac{16}{15} \E[\| m_{t} \|] + \frac{\epsilon}{15} .\]
Finally, averaging over $t=1$ to $T$ yields,
\[ \frac{1}{T} \sum_{t=1}^T \E[d(0, \partial f(x_{t-K} + \delta B))] \le \frac{16}{15T} \sum_{t=1}^T \E[\| m_t\|] + \frac{\epsilon}{15} \le \frac{\epsilon}{3} \]
When $t<K$, $\partial f(x_{t-K} + \delta B )$ simply means $\partial f(x_1 + \delta B)$. As a result, if we randomly out put $x_{\max\{1, t-K\}}$ among $t \in [1,T]$, then with at least probability $2/3$, the $\delta$-subdifferential set contains an element with norm smaller than $\epsilon$. To achieve $1-\gamma$ probability result for arbitrary $\gamma$, it suffices to repeat the algorithm $\log (1/\gamma)$ times. 

\end{proof}

\section{Proof of Theorem~\ref{thm:lower}} \label{app:lower}

\begin{figure}[t]
\centering
\includegraphics[width=0.6\linewidth]{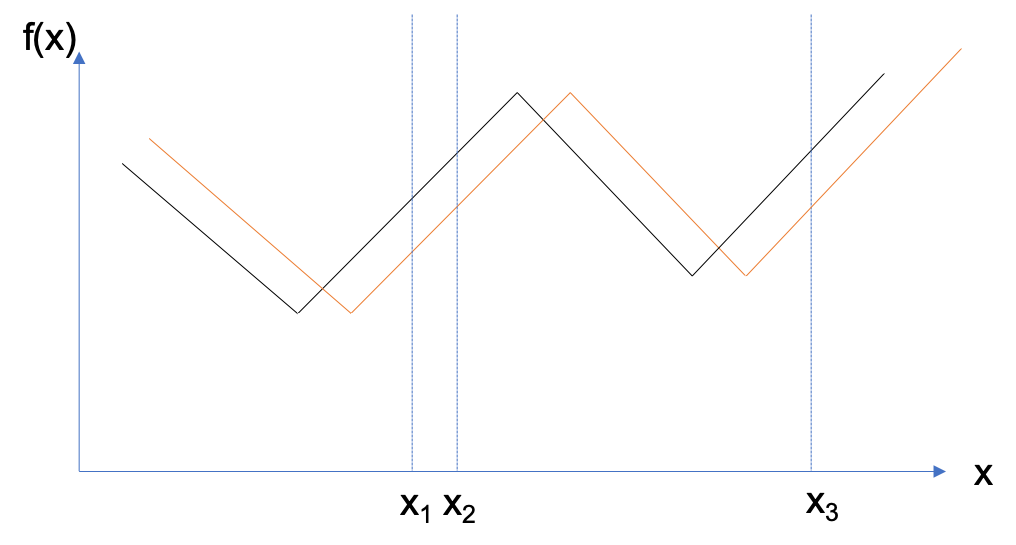}
\caption{}\label{fig:proof}
\end{figure}

\begin{proof}

The proof idea is similar to Proof of Theorem~\ref{thm:finite}. Since the algorithm does not have access to function value, our resisting strategy now always returns
\begin{align*}
    \nabla f(x) = 1.
\end{align*}

If we can prove that for any set of points $x_{k}, k \in [1,K], K\le \frac{\Delta}{8\delta}$, there exists two one dimensional functions such that they satisfy the resisting strategy $\nabla f(x_k) = 1, k \in [1,K]$, and that the two functions do not have two stationary points that are $\delta$ close to each other, then we know no randomized/deterministic can return an $(\delta, \epsilon)-$stationary points with probability more than $1/2$ for both functions simultaneously. In other word, no algorithm that query $K$ points can distinguish these two functions. Hence we proved the theorem following the definition of complexity in~\eqref{eq:complexity}. 

From now on, let $x_{k}, k \in [1,K]$ be the sequence of points queried after sorting in ascending order. Below, we construct two functions such that $\nabla f(x_k) = 1, k \in [1,K]$, and that the two functions do not have two stationary points that are $\delta$ close to each other. Assume WLOG that $x_k$ are ascending. First, we define $f:\mathbb{R} \to \mathbb{R}$ as follows:
\begin{align*}
    &f(x_0) = 0,\\
    &f'(x) = -1 \quad \text{if  } \quad  x  \le x_1 - 2\delta, \\
    &f'(x) = 1 \quad \text{if exists $i \in [K]$ such that} \quad  |x - x_i| \le 2\delta, \\
    &f'(x) = -1 \quad \text{if exists $i \in [K]$ such that} \quad  x \in [x_i + 2\delta, \frac{x_i+x_{i+1}}{2}],\\
    &f'(x) = 1 \quad \text{if exists $i \in [K]$ such that} \quad  x \in [ \frac{x_i+x_{i+1}}{2}, x_{i+1} - 2\delta],\\
    &f'(x) = 1 \quad \text{if  } \quad  x  \ge x_K + 2\delta 
\end{align*}
A schematic picture is shown in Figure~\ref{fig:proof}. It is clear that this function satisfies the resisting strategy. It also has stationary points that are at least $4\delta$ apart. Therefore, simply by shifting the function by $1.5\delta$, we get the second function.

The only thing left to check is that $\sup_k f(x_k) - \inf_x f(x) \le \Delta$. By construction, we note that the value from $x_i$ to $x_{i+1}$ is non decreasing and increase by at most $4\delta$
\begin{align}\label{eq:subopt-in-lower}
    \sup_k f(x_k) - f(x_0) \le 4\delta K \le \Delta/2.
\end{align}
We further notice that the global minimum of the function is achieved at $x_0 - 2\delta$, and $f(x_0 - 2\delta) = -2\delta \le 4\delta K \le \Delta/2.$ Combined with~\eqref{eq:subopt-in-lower}, we get,
\begin{align}
    \sup_k f(x_k) - \inf_x f(x) \le \Delta.
\end{align}

\end{proof}

\end{document}